\documentclass[12pt]{article}
\usepackage{amsmath,amssymb,amsthm}
\numberwithin{equation}{section}
\usepackage{hyperref}
\usepackage{units}
\usepackage{color}
\usepackage[T1]{fontenc}
\usepackage[utf8]{inputenc}
\usepackage{authblk}
\usepackage{bm}
\usepackage{graphicx}

\usepackage{datetime}

\usepackage[toc,page]{appendix}

\textheight 24.5cm
\textwidth 16.3cm
\oddsidemargin 0.in
\evensidemargin 0.in
\topmargin -1.8cm

\setlength{\parindent}{0cm}

\newcommand{\Z}{{\mathbb Z}}


\newtheorem{thm}{Theorem}

\newtheorem{lemma}{Lemma}

\newtheorem{cor}[thm]{Corollary}

\title{Identities for second order recurrence sequences\thanks{AMS Classification: 11B37, 11B39, 65B10}}

\author[]{Kunle Adegoke \thanks{adegoke00@gmail.com}}

\affil{Department of Physics and Engineering Physics, \mbox{Obafemi Awolowo University}, 220005 Ile-Ife, Nigeria}

\begin{document}

\date{}

\maketitle

\begin{abstract}
\noindent We derive several identities for arbitrary homogeneous second order recurrence sequences with constant coefficients. The results are then applied to present a harmonized study of six well known integer sequences, namely the Fibonacci sequence, the sequence of Lucas numbers, the Jacobsthal sequence, the Jacobsthal-Lucas sequence, the Pell sequence and the Pell-Lucas sequence.
\end{abstract}
\tableofcontents
\section{Introduction}
Our aim in writing this paper is to derive several identities for arbitrary second order recurrence sequences with constant coefficients. As a concrete illustration of how our results may be put to use, we will derive identities for the integer sequences mentioned in the abstract and defined below.

\bigskip

The Fibonacci numbers, $F_n$, and the Lucas numbers, $L_n$, are defined, for \mbox{$n\in\Z$}, as usual, through the recurrence relations \mbox{$F_n=F_{n-1}+F_{n-2}$ ($n\ge 2$)}, \mbox{$F_0=0$, $F_1=1$} and \mbox{$L_n=L_{n-1}+L_{n-2}$} ($n\ge 2$), $L_0=2$, $L_1=1$, with $F_{-n}=(-1)^{n-1}F_n$ and $L_{-n}=(-1)^nL_n$. Exhaustive discussion of the properties of Fibonacci and Lucas numbers can be found in Vajda~\cite{vajda} and in Koshy~\cite{koshy}.

\bigskip

The Jacobsthal numbers, $J_n$, and the Jacobsthal-Lucas numbers, $j_n$, are defined, for \mbox{$n\in\Z$}, through the recurrence relations \mbox{$J_n=J_{n-1}+2J_{n-2}$} ($n\ge 2$), \mbox{$J_0=0$, $J_1=1$} and \mbox{$j_n=j_{n-1}+2j_{n-2}$} ($n\ge 2$), $j_0=2$, $j_1=1$, with $J_{-n}=(-1)^{n-1}2^{-n}J_n$ and $j_{-n}=(-1)^n2^{-n}j_n$. Horadam~\cite{horadam96} and Aydin~\cite{aydin} are good reference materials on the Jacobsthal and associated sequences. 

\bigskip

The Pell numbers, $P_n$, and Pell-Lucas numbers, $Q_n$, are defined, for \mbox{$n\in\Z$}, through the recurrence relations \mbox{$P_n=2P_{n-1}+P_{n-2}$} ($n\ge 2$), \mbox{$P_0=0$, $P_1=1$} and \mbox{$Q_n=2Q_{n-1}+Q_{n-2}$} ($n\ge 2$), $Q_0=2$, $Q_1=1$, with $P_{-n}=(-1)^{n-1}P_n$ and $Q_{-n}=(-1)^nQ_n$. Koshy~\cite{koshy14}, Horadam~\cite{horadam71} and Patel and Shrivastava~\cite{patel13} are useful source materials on Pell and Pell-Lucas numbers.

\bigskip

Note that, in this paper, apart from in the binomial summation identities where the upper limit must be non-negative, the upper limit in the summation identities is allowed to take on negative values once we adopt the summation convention that, if $k<0$ then
\[
\sum_{r=0}^k{f_r}\equiv-\sum_{r=k+1}^{-1}{f_r}\,,
\]
as long as $f_r$ is not singular in the summation interval.

\bigskip

Here is a couple of results to whet the reader's appetite for reading on:

\bigskip

From Corollary~\ref{cor.o3h0k66}:
\[
F_{n + h} L_{n + k}  - F_n L_{n + h + k}  = ( - 1)^n F_h L_k\,,
\]
\[
J_{n + h} j_{n + k}  - J_n j_{n + h + k}  = ( - 1)^n 2^n J_h j_k\,,
\]
\[
P_{n + h} Q_{n + k}  - P_n Q_{n + h + k}  = ( - 1)^n P_h Q_k\,.
\]
From Theorem~\ref{thm.x3r8tfg}:
\[
( - 1)^u L_u^2  + ( - 1)^v L_v^2  + ( - 1)^w L_w^2  = ( - 1)^w L_u L_v L_w  + 4\,,
\]
\[
( - 1)^u 2^v j_u^2  + ( - 1)^v 2^u j_v^2  + ( - 1)^w j_w^2  = ( - 1)^w j_u j_v j_w  + 2^{w + 2}
\]
and
\[
( - 1)^u Q_u^2  + ( - 1)^v Q_v^2  + ( - 1)^w Q_w^2  = ( - 1)^w Q_u Q_v Q_w  + 4\,,
\]
for integers $u$, $v$, $w$ such that $u+v=w$.

\bigskip

From Theorem~\ref{thm.d7ll8n8}, for nonnegative integer $k$ and arbitrary integers $a$, $b$, $c$, $d$, $e$, $m$ for which the denominator does not vanish:
\[
\begin{split}
&\sum\limits_{r = 0}^k {\binom kr\left( {\frac{{F_{d - c} F_{e - b}  - F_{e - c} F_{d - b}}}{{F_{d - a} F_{e - c}  - F_{e - a} F_{d - c}}}} \right)^r F_{m - (b-c) k + (b-a)r} }\\
&\qquad = \left( {\frac{{F_{d - a} F_{e - b}  - F_{e - a} F_{d - b}}}{{F_{d - a} F_{e - c}  - F_{e - a} F_{d - c}}}} \right)^k F_m\,,
\end{split}
\]
\[
\begin{split}
&\sum\limits_{r = 0}^k {\binom kr\left( {\frac{{J_{d - c} J_{e - b}  - J_{e - c} J_{d - b}}}{{J_{d - a} J_{e - c}  - J_{e - a} J_{d - c}}}} \right)^r J_{m - (b-c) k + (b-a)r} }\\
&\qquad = \left( {\frac{{J_{d - a} J_{e - b}  - J_{e - a} J_{d - b}}}{{J_{d - a} J_{e - c}  - J_{e - a} J_{d - c}}}} \right)^k J_m\,,
\end{split}
\]
\[
\begin{split}
&\sum\limits_{r = 0}^k {\binom kr\left( {\frac{{P_{d - c} P_{e - b}  - P_{e - c} P_{d - b}}}{{P_{d - a} P_{e - c}  - P_{e - a} P_{d - c}}}} \right)^r P_{m - (b-c) k + (b-a)r} }\\
&\qquad = \left( {\frac{{P_{d - a} P_{e - b}  - P_{e - a} P_{d - b}}}{{P_{d - a} P_{e - c}  - P_{e - a} P_{d - c}}}} \right)^k P_m\,.
\end{split}
\]
\section{Main results}
\subsection{Identities}
\begin{lemma}\label{lem.xs3wlq8}
Let $\{X_m\}$ and $\{Y_m\}$, $m\in\Z$, be homogeneous second order recurrence sequences with constant coefficients. Let $\{X_m\}$ and $\{Y_m\}$ possess the same recurrence relation. Let $\Delta_{xy} = X_{d - a} Y_{e - b}  - X_{e - a} Y_{d - b}$. Then, the identity:
\[
\begin{split}
&(X_{d - a} Y_{e - b}  - X_{e - a} Y_{d - b} )X_{m - c}\\
&\quad= (X_{d - c} Y_{e - b}  - X_{e - c} Y_{d - b} )X_{m - a}\\ 
&\qquad+ (X_{d - a} X_{e - c}  - X_{e - a} X_{d - c} )Y_{m - b}\,, 
\end{split}
\]
holds for arbitrary integers $a$, $b$, $c$, $d$, $e$ and $m$ for which $\Delta_{xy}\ne 0$.
\end{lemma}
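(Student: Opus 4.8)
The plan is to recognize the asserted identity as the cofactor expansion of a single $3\times 3$ determinant that vanishes for structural reasons. Introduce
\[
D(m)=\begin{vmatrix}
X_{m-c} & X_{m-a} & Y_{m-b}\\
X_{d-c} & X_{d-a} & Y_{d-b}\\
X_{e-c} & X_{e-a} & Y_{e-b}
\end{vmatrix}
\]
and expand along the first row. The $(1,1)$--cofactor is exactly $X_{d-a}Y_{e-b}-X_{e-a}Y_{d-b}=\Delta_{xy}$, the signed $(1,2)$--cofactor is $-(X_{d-c}Y_{e-b}-X_{e-c}Y_{d-b})$, and the $(1,3)$--cofactor is $X_{d-c}X_{e-a}-X_{e-c}X_{d-a}=-(X_{d-a}X_{e-c}-X_{e-a}X_{d-c})$. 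Hence the equation $D(m)=0$ reads $\Delta_{xy}X_{m-c}=(X_{d-c}Y_{e-b}-X_{e-c}Y_{d-b})X_{m-a}+(X_{d-a}X_{e-c}-X_{e-a}X_{d-c})Y_{m-b}$, which is precisely the identity in the statement. So the whole lemma reduces to showing $D(m)=0$ for every integer $m$; the hypothesis $\Delta_{xy}\neq 0$ is not needed for this step and only matters when one afterwards wishes to solve the identity for $X_{m-c}$, as happens in the later results.

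To see $D(m)=0$, read the three columns of the matrix as samples of three sequences: the first column lists the values of $t\mapsto X_{t-c}$ at $t=m,d,e$, the second lists $t\mapsto X_{t-a}$ at the same three points, and the third lists $t\mapsto Y_{t-b}$. Since $\{X_m\}$ and $\{Y_m\}$ obey the \emph{same} homogeneous second order recurrence with constant coefficients, shifting the index preserves that recurrence, so each of $\{X_{t-c}\}$, $\{X_{t-a}\}$, $\{Y_{t-b}\}$ is again a solution of it. The solution space of such a recurrence is two-dimensional, so the linear map ``sample at $t\in\{m,d,e\}$'' carries it into a subspace of dimension at most two. Three vectors lying in a two-dimensional space are linearly dependent, hence the columns of the matrix are dependent and $D(m)=0$. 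Expanding $D(m)=0$ along the first row and transposing the $Y_{m-b}$ term then yields the displayed identity for all $a,b,c,d,e,m$, and in particular whenever $\Delta_{xy}\neq 0$.

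I do not expect a genuine obstacle; the only point requiring care is the bookkeeping of the cofactor signs, so that the coefficient of $X_{m-c}$ comes out as $\Delta_{xy}$ with a plus sign and the third term is exactly $(X_{d-a}X_{e-c}-X_{e-a}X_{d-c})Y_{m-b}$ rather than its negative. As an alternative route one can substitute the Binet-type closed forms $X_m=A_1\alpha^m+A_2\beta^m$ and $Y_m=B_1\alpha^m+B_2\beta^m$ (treating a repeated characteristic root as a limiting case) and verify the resulting Laurent-polynomial identity in $\alpha,\beta$ by direct expansion; this is elementary but less transparent than the determinant argument.
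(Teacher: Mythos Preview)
Your argument is correct. Recognizing the asserted relation as the first-row expansion of
\[
D(m)=\begin{vmatrix}
X_{m-c} & X_{m-a} & Y_{m-b}\\
X_{d-c} & X_{d-a} & Y_{d-b}\\
X_{e-c} & X_{e-a} & Y_{e-b}
\end{vmatrix}
\]
and then observing that the three columns are evaluations at $t=m,d,e$ of three shifted sequences living in the two-dimensional solution space of the common recurrence is a clean and rigorous way to force $D(m)=0$. Your bookkeeping of the cofactor signs is right, and your remark that the vanishing of $D(m)$ does not depend on $\Delta_{xy}\ne 0$ is a genuine strengthening.

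The paper, by contrast, proceeds by undetermined coefficients: it posits $X_{m-c}=\lambda_1 X_{m-a}+\lambda_2 Y_{m-b}$, evaluates at $m=d$ and $m=e$ to obtain a $2\times 2$ linear system for $\lambda_1,\lambda_2$, and then solves it via Cramer's rule under the hypothesis that the determinant $\Delta_{xy}$ is nonzero. Substituting the resulting $\lambda_1,\lambda_2$ back yields the identity. The two approaches are dual in the sense that yours packages the whole computation into the vanishing of a single $3\times 3$ determinant, while the paper singles out the $2\times 2$ minor $\Delta_{xy}$ up front and inverts it. What your route buys is conceptual economy and the automatic removal of the nondegeneracy hypothesis (something the paper only achieves later, in Lemma~\ref{lem.ggc97nq} and Lemma~\ref{lem.k8oc7py}, by a separate case analysis). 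What the paper's route buys is that it makes explicit why the coefficients $\lambda_1,\lambda_2$ exist and are unique when $\Delta_{xy}\ne 0$, which is the form actually needed when the identity is fed into the summation lemmas of \cite{adegoke18}.
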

\begin{proof}
By hypothesis, $\{X_m\}$ and $\{Y_m\}$ have the same recurrence relations, therefore we seek a relation of the following type:
\begin{equation}\label{eq.dx4yk9j}
X_{m-c}=\lambda_1X_{m-a}+\lambda_2Y_{m-b}\,,
\end{equation}
between any three numbers $X_{m-c}$, $X_{m-a}$ and $Y_{m-b}$, where $a$, $b$ and $c$ are fixed integers and $\lambda_1$ and $\lambda_2$ are suitable constants.
Evaluating~\eqref{eq.dx4yk9j} at $m=d$ and at $m=e$ produces two equations:
\begin{equation}\label{eq.fvstnur}
X_{d - c}  = \lambda _1 X_{d - a}  + \lambda _2 Y_{d - b}
\end{equation}
and
\begin{equation}\label{eq.haxa0oy}
X_{e - c}  = \lambda _1 X_{e - a}  + \lambda _2 Y_{e - b}\,,
\end{equation}
to be solved simultaneously for the constants $\lambda_1$ and $\lambda_2$.
Solutions exist if
\[
\Delta _{xy}  = \left| {\begin{array}{*{20}c}
   {X_{d - a} } & {Y_{d - b} }  \\
   {X_{e - a} } & {Y_{e - b} }  \\
\end{array}} \right| = X_{d - a} Y_{e - b}  - X_{e - a}Y_{d - b}  \ne 0\,.
\]

The result follows from substituting into~\eqref{eq.dx4yk9j} the $\lambda_1$ and $\lambda_2$ found from solving~\eqref{eq.fvstnur} and~\eqref{eq.haxa0oy}.

\end{proof}
\begin{lemma}\label{lem.ggc97nq}
Let $\{X_m\}$, $m\in\Z$, be a homogeneous second order recurrence sequence with constant coefficients. Then, the following identity holds for arbitrary integers $a$, $b$, $c$, $d$, $e$ and $m$:
\[
\begin{split}
&(X_{d - a} X_{e - b}  - X_{e - a} X_{d - b} )X_{m - c}\\
&\quad= (X_{d - c} X_{e - b}  - X_{e - c} X_{d - b} )X_{m - a}\\ 
&\qquad+ (X_{d - a} X_{e - c}  - X_{e - a} X_{d - c} )X_{m - b}\,. 
\end{split}
\]

\end{lemma}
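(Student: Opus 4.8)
The plan is to recognize that Lemma~\ref{lem.ggc97nq} is the special case $\{Y_m\}=\{X_m\}$ of Lemma~\ref{lem.xs3wlq8}, so the only work is to check that this specialization is legitimate. Indeed, a single homogeneous second order recurrence sequence trivially ``possesses the same recurrence relation'' as itself, so the hypotheses of Lemma~\ref{lem.xs3wlq8} are met with $Y_m=X_m$ for every $m$, and substituting $Y\mapsto X$ throughout the displayed identity of Lemma~\ref{lem.xs3wlq8} yields exactly the claimed identity. The first step is therefore simply to state this reduction.

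The one genuine obstacle is the nondegeneracy hypothesis: Lemma~\ref{lem.xs3wlq8} requires $\Delta_{xy}=X_{d-a}Y_{e-b}-X_{e-a}Y_{d-b}\ne 0$, which upon setting $Y=X$ becomes $X_{d-a}X_{e-b}-X_{e-a}X_{d-b}\ne 0$ --- precisely the coefficient of $X_{m-c}$ on the left-hand side. Yet Lemma~\ref{lem.ggc97nq} asserts the identity for \emph{arbitrary} integers $a,b,c,d,e,m$, with no exclusion. So the proof must handle the case where this determinant vanishes. The key observation to resolve this is that the asserted identity is \emph{polynomial}: writing the general term of the sequence as $X_n=A\alpha^n+B\beta^n$ (or $X_n=(A+Bn)\alpha^n$ in the degenerate-characteristic-root case), each of the three bracketed coefficients and each factor $X_{m-\bullet}$ is a fixed expression in finitely many fixed quantities, and the identity is an algebraic identity among them that holds on the Zariski-dense open set where $\Delta\ne 0$; hence it holds everywhere by continuity/polynomial identity.

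Concretely, I would carry it out as follows. First, invoke Lemma~\ref{lem.xs3wlq8} with $\{Y_m\}=\{X_m\}$ to obtain the identity whenever $D:=X_{d-a}X_{e-b}-X_{e-a}X_{d-b}\ne 0$. Second, for the remaining case $D=0$, observe that the difference of the two sides of the claimed identity, call it $\Phi(a,b,c,d,e,m)$, can be expanded using $X_n=A\alpha^n+B\beta^n$ into a finite sum of monomials in $\alpha^{\pm 1},\beta^{\pm 1}$ with coefficients that are polynomials in $A,B,\alpha,\beta$; since $\Phi$ vanishes identically as a function of $A,B$ (or one may fix the sequence and instead perturb, e.g., treat $a$ as a continuous parameter via $\alpha^{d-a}$, $\beta^{d-a}$), and the locus $D=0$ is a proper subvariety, the vanishing extends to $D=0$. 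Alternatively, and perhaps cleaner for this paper's style, one can simply give a direct three-line verification: substituting $X_n=A\alpha^n+B\beta^n$ into both sides and collecting terms, both sides reduce to $AB(\alpha\beta)^{m}$ times an antisymmetric combination of the exponents that manifestly cancels --- an unconditional identity requiring no nonvanishing hypothesis at all. The main obstacle is thus purely the bookkeeping of which route to present; the mathematics is routine, and I expect the author simply notes the specialization and remarks that the restriction $\Delta_{xy}\ne 0$ is unnecessary here because the resulting identity is a polynomial identity valid without restriction.
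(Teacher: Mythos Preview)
Your reduction to Lemma~\ref{lem.xs3wlq8} with $Y=X$ for the nondegenerate case matches the paper exactly. Where you diverge is in handling $\Delta_{xx}=0$: the paper does not invoke Binet or any polynomial-identity/Zariski argument, but instead attempts an explicit finite case analysis, listing six ways in which $\Delta_{xx}=X_{d-a}X_{e-b}-X_{e-a}X_{d-b}$ can vanish (various pairs of entries being zero, or pairs being equal) and checking in each that both sides of the identity collapse to zero. Your route is arguably more robust: the paper's enumeration is not truly exhaustive (a $2\times 2$ determinant can vanish with rows merely proportional, not equal or zero), and it relies on implications like ``$X_{d-a}=0=X_{e-a}\Rightarrow d=e$'' that can fail for sequences with repeated zeros. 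Your direct Binet-substitution verification, by contrast, yields an unconditional algebraic identity and sidesteps these issues. One small caveat on your Zariski formulation: if you vary over sequence parameters $(A,B,\alpha,\beta)$ with the integers $a,b,d,e$ fixed, then for certain degenerate integer choices (e.g.\ $a=b$) the locus $\Delta_{xx}=0$ is the whole parameter space rather than a proper subvariety, so density alone does not close the argument there; the direct Binet expansion you mention as the alternative is the cleaner way to finish uniformly.
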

\begin{proof}
Let $\Delta_{xx}=X_{d - a} X_{e - b}  - X_{e - a} X_{d - b}\ne 0$ and proceed as in the proof of Lemma~\ref{lem.xs3wlq8}. We have
\begin{equation}\label{eq.rdg1fww}
\begin{split}
&(X_{d - a} X_{e - b}  - X_{e - a} X_{d - b} )X_{m - c}\\
&\quad= (X_{d - c} X_{e - b}  - X_{e - c} X_{d - b} )X_{m - a}\\ 
&\qquad+ (X_{d - a} X_{e - c}  - X_{e - a} X_{d - c} )X_{m - b}\,. 
\end{split}
\end{equation}
But we will now prove that the identity~\eqref{eq.rdg1fww} continues to hold even if $\Delta_{xx}=0$. Let
\[
\Delta_1=X_{d - c} X_{e - b}  - X_{e - c} X_{d - b},\quad\Delta_2=X_{d - a} X_{e - c}  - X_{e - a} X_{d - c}\,.
\]
There are six possible situations in which $\Delta_{xx}$ can vanish. We consider them in turn.
\begin{enumerate}
\item[1.] $X_{d-a}=0=X_{e-a}$, in which case $d=e\Rightarrow$ $\Delta_1=\Delta_2=0$ and hence identity~\eqref{eq.rdg1fww} remains valid. 
\item[2.] $X_{e-b}=0=X_{d-b}$, in which case, again,  $d=e\Rightarrow$ $\Delta_1=\Delta_2=0$ and hence identity~\eqref{eq.rdg1fww} remains valid
\item[3.] $X_{d-a}=0=X_{d-b}$, in which case $b=a$ and the right side of identity~\eqref{eq.rdg1fww} reads
\[
(X_{d - c} X_{e - a}  - X_{e - c} X_{d - a} )X_{m - a}+ (X_{d - a} X_{e - c}  - X_{e - a} X_{d - c} )X_{m - a}\,,
\]
which evaluates to zero, so that identity~\eqref{eq.rdg1fww} remains valid.
\item[4.]  $X_{e-b}=0=X_{d-b}$, in which case $e=d$ and the right side of identity~\eqref{eq.rdg1fww} reads 
\[
(X_{d - c} X_{d - b}  - X_{d - c} X_{d - b} )X_{m - a} + (X_{d - a} X_{d - c}  - X_{d - a} X_{d - c} )X_{m - b}\,,
\]
which evaluates to zero, so that identity~\eqref{eq.rdg1fww} remains valid.
\item[5.] $X_{d-a}=X_{e-a}$ and $X_{e-b}=X_{d-b}$, in which case, again,  $d=e\Rightarrow$ $\Delta_1=\Delta_2=0$ and hence identity~\eqref{eq.rdg1fww} remains valid.
\item[6.] $X_{d-a}=X_{d-b}$ and $X_{e-b}=X_{e-a}$, in which case, as in case~3,  $b=a$ and hence identity~\eqref{eq.rdg1fww} remains valid.

\end{enumerate}
Thus we see that identity~\eqref{eq.rdg1fww} is valid regardless of the nature of $\Delta_{xx}$, so that the identity holds for all integers.

\end{proof}
\begin{lemma}\label{lem.r460krb}
Let $\{X_m\}$, $m\in\Z$, be a homogeneous second order recurrence sequence with constant coefficients. Then, the following identity holds for arbitrary integers $a$, $b$, $c$ and $m$:
\[
\begin{split}
&(X_0{}^2  - X_{b- a} X_{a- b} )X_{m - c}\\
&\quad= (X_{a- c} X_0  - X_{b- c} X_{a- b} )X_{m - a}\\ 
&\qquad+ (X_0 X_{b- c}  - X_{b- a} X_{a- c} )X_{m - b}\,. 
\end{split}
\]

\end{lemma}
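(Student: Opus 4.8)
The plan is to obtain Lemma~\ref{lem.r460krb} as a direct specialization of Lemma~\ref{lem.ggc97nq}. The statement of Lemma~\ref{lem.r460krb} is exactly what one gets from the identity in Lemma~\ref{lem.ggc97nq} by choosing the free parameters $d$ and $e$ appropriately; specifically, I would set $d=a$ and $e=b$ in Lemma~\ref{lem.ggc97nq}.

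Carrying this out: with $d=a$ and $e=b$, the coefficient $X_{d-a}X_{e-b}-X_{e-a}X_{d-b}$ becomes $X_0 X_0 - X_{b-a}X_{a-b} = X_0{}^2 - X_{b-a}X_{a-b}$, which matches the left-hand coefficient in Lemma~\ref{lem.r460krb}. The first coefficient on the right, $X_{d-c}X_{e-b}-X_{e-c}X_{d-b}$, becomes $X_{a-c}X_0 - X_{b-c}X_{a-b}$, as required. The second right-hand coefficient, $X_{d-a}X_{e-c}-X_{e-a}X_{d-c}$, becomes $X_0 X_{b-c} - X_{b-a}X_{a-c}$, again matching. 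Since Lemma~\ref{lem.ggc97nq} holds for \emph{arbitrary} integers $a,b,c,d,e,m$ (its proof already disposed of the case $\Delta_{xx}=0$), the substitution $d=a$, $e=b$ is legitimate with no side conditions, and the conclusion follows immediately for all integers $a$, $b$, $c$, $m$.

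There is essentially no obstacle here; the only thing to be careful about is the bookkeeping of which index plays which role, since Lemma~\ref{lem.r460krb} keeps the names $a$, $b$, $c$ while consuming the $d,e$ slots of the parent lemma. I would simply write out the three coefficient substitutions explicitly so the reader can verify the match term by term, and then invoke Lemma~\ref{lem.ggc97nq}. No appeal to the defining recurrence or to the vanishing-determinant case analysis is needed a second time, because all of that work is already packaged into Lemma~\ref{lem.ggc97nq}.

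Alternatively, if one preferred a self-contained argument, one could instead take Lemma~\ref{lem.xs3wlq8} with $\{Y_m\}=\{X_m\}$ and then set $d=a$, $e=b$ there; but this would only give the identity under the restriction $X_0{}^2 - X_{b-a}X_{a-b}\ne 0$, so routing through Lemma~\ref{lem.ggc97nq} is the cleaner choice since it delivers the unrestricted statement at once.
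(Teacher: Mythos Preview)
Your proposal is correct and matches the paper's intended approach: the paper states Lemma~\ref{lem.r460krb} immediately after Lemma~\ref{lem.ggc97nq} without proof, so it is meant as a direct specialization, and your choice $d=a$, $e=b$ is precisely the substitution that produces it. Your term-by-term verification of the three coefficients is accurate, and you are right that routing through Lemma~\ref{lem.ggc97nq} (rather than Lemma~\ref{lem.xs3wlq8}) avoids any side condition on $\Delta_{xx}$.
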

\subsection{Summation identities}
The following identities are obtained by making appropriate substitutions from Lemmata~\ref{lem.xs3wlq8}~and~\ref{lem.ggc97nq} into Lemmata~1 and~2 of~\cite{adegoke18}.
\begin{lemma}\label{lem.mbs9sdf}
Let $\{X_m\}$ and $\{Y_m\}$, $m\in\Z$, be homogeneous second order recurrence sequences with constant coefficients. Let $\{X_m\}$ and $\{Y_m\}$ possess the same recurrence relation. Let $\Delta_{xy} = X_{d - a} Y_{e - b}  - X_{e - a} Y_{d - b}$, $\Delta_{1}=X_{d - c} Y_{e - b}  - X_{e - c} Y_{d - b}$ and $\Delta_{2}=X_{d - a} X_{e - c}  - X_{e - a} X_{d - c}$. Then, the following identity holds for arbitrary integers $a$, $b$, $c$, $d$, $e$, $m$ and $k$ for which $\Delta_{xy}\ne 0$, $\Delta_{1}\ne 0$, $\Delta_{2}\ne 0$:
\begin{equation}
\begin{split}
&\sum\limits_{r = 0}^k {\left( {\frac{{X_{d - a} Y_{e - b}  - X_{e - a} Y_{d - b}}}{{X_{d - c} Y_{e - b}  - X_{e - c} Y_{d - b}}}} \right)^r Y_{m - k(a-c)  - b + c  + (a-c) r} }\\
&\qquad = \left( {\frac{{X_{d - a} Y_{e - b}  - X_{e - a} Y_{d - b}}}{{X_{d - a} X_{e - c}  - X_{e - a} X_{d - c}}}} \right)\left( {\frac{{X_{d - a} Y_{e - b}  - X_{e - a} Y_{d - b}}}{{X_{d - c} Y_{e - b}  - X_{e - c} Y_{d - b}}}} \right)^k X_m\\
&\quad\qquad- \left( {\frac{{X_{d - c} Y_{e - b}  - X_{e - c} Y_{d - b}}}{{X_{d - a} X_{e - c}  - X_{e - a} X_{d - c}}}} \right)X_{m - (k + 1)(a-c) }\,.
\end{split}
\end{equation}

\end{lemma}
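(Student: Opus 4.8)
The plan is to feed the three-term relation supplied by Lemma~\ref{lem.xs3wlq8} into the telescoping mechanism of Lemma~1 of~\cite{adegoke18}. Since $\Delta_{xy}\ne 0$, Lemma~\ref{lem.xs3wlq8} (with $a,b,c,d,e$ held fixed and only the index in the ``$m$'' slot allowed to vary) gives, for every integer $n$,
\[
\Delta_{xy}X_{n-c}=\Delta_1 X_{n-a}+\Delta_2 Y_{n-b}\,.
\]
Because $\Delta_1\ne 0$ this can be solved for $X_{n-a}$; after the harmless reindexing $n\mapsto n+a$ and writing $q=\Delta_{xy}/\Delta_1$ and $s=a-c$, it takes the one-step shift form
\[
X_t=q\,X_{t+s}-\frac{\Delta_2}{\Delta_1}\,Y_{t+a-b}\qquad(t\in\Z)\,,
\]
which is exactly the hypothesis required by the summation lemma of~\cite{adegoke18}.

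Next I would iterate this relation $k+1$ times (for $k\ge0$; a routine check, or the summation convention of the Introduction, extends what follows to negative $k$). Substituting the expression for $X_t$ into itself repeatedly collapses the chain of $X$-terms and leaves
\[
X_t=q^{k+1}X_{t+(k+1)s}-\frac{\Delta_2}{\Delta_1}\sum_{r=0}^{k}q^{r}Y_{t+a-b+rs}\,,
\]
so that, dividing by $-\Delta_2/\Delta_1$ (legitimate since $\Delta_2\ne0$),
\[
\sum_{r=0}^{k}q^{r}Y_{t+a-b+rs}=\frac{\Delta_1}{\Delta_2}\Bigl(q^{k+1}X_{t+(k+1)s}-X_t\Bigr)\,.
\]

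Finally I would specialise the free index to $t=m-(k+1)(a-c)$. With this choice $t+(k+1)s=m$, so $X_{t+(k+1)s}=X_m$ and $X_t=X_{m-(k+1)(a-c)}$, while the summand index becomes $t+a-b+rs=m-k(a-c)-b+c+(a-c)r$; rewriting $\tfrac{\Delta_1}{\Delta_2}q^{k+1}=\tfrac{\Delta_{xy}}{\Delta_2}(\Delta_{xy}/\Delta_1)^{k}$ via $q=\Delta_{xy}/\Delta_1$ then turns the displayed identity into exactly the asserted one. Everything here is mechanical once the right substitution is made; the only step that demands care is this last bit of bookkeeping — pinning down the shift $t$ and tracking the exponents of $q$ alongside the arguments of $X$ and $Y$ so that the result lands precisely in the stated form — together with the observation, to be made at the outset, that the three standing hypotheses $\Delta_{xy}\ne0$, $\Delta_1\ne0$, $\Delta_2\ne0$ are exactly what is needed to license, respectively, the use of Lemma~\ref{lem.xs3wlq8}, the formation of the ratio $q$, and the final division by $\Delta_2$.
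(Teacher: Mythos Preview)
Your proposal is correct and follows exactly the route indicated in the paper: the paper simply says that the identity is obtained by substituting the three-term relation of Lemma~\ref{lem.xs3wlq8} into Lemma~1 of~\cite{adegoke18}, and you have carried out precisely that substitution, unpacking the telescoping argument of the cited lemma explicitly and tracking the reindexing carefully. Your detailed bookkeeping (the choice $t=m-(k+1)(a-c)$ and the rewriting of $\tfrac{\Delta_1}{\Delta_2}q^{k+1}$) is exactly the ``appropriate substitution'' the paper alludes to but does not spell out.
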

\begin{lemma}\label{lem.ubk4var}
Let $\{X_m\}$, $m\in\Z$, be a homogeneous second order recurrence sequence with constant coefficients. Let $\Delta_{1}=X_{d - c} X_{e - b}  - X_{e - c} X_{d - b}$ and $\Delta_{2}=X_{d - a} X_{e - c}  - X_{e - a} X_{d - c}$. Then, the following identities hold for integer $k$ and arbitrary integers $a$, $b$, $c$, $d$, $e$ and $m$ for which $\Delta_{1}\ne 0$ and $\Delta_{2}\ne 0$:
\begin{equation}
\begin{split}
&\sum\limits_{r = 0}^k {\left( {\frac{{X_{d - a} X_{e - b}  - X_{e - a} X_{d - b}}}{{X_{d - c} X_{e - b}  - X_{e - c} X_{d - b}}}} \right)^r X_{m - k(a-c)  - b + c  + (a-c) r} }\\
&\qquad = \left( {\frac{{X_{d - a} X_{e - b}  - X_{e - a} X_{d - b}}}{{X_{d - a} X_{e - c}  - X_{e - a} X_{d - c}}}} \right)\left( {\frac{{X_{d - a} X_{e - b}  - X_{e - a} X_{d - b}}}{{X_{d - c} X_{e - b}  - X_{e - c} X_{d - b}}}} \right)^k X_m\\
&\quad\qquad- \left( {\frac{{X_{d - c} X_{e - b}  - X_{e - c} X_{d - b}}}{{X_{d - a} X_{e - c}  - X_{e - a} X_{d - c}}}} \right)X_{m - (k + 1)(a-c) }\,,
\end{split}
\end{equation}

\begin{equation}
\begin{split}
&\sum\limits_{r = 0}^k {\left( {\frac{{X_{d - a} X_{e - b}  - X_{e - a} X_{d - b}}}{{X_{d - a} X_{e - c}  - X_{e - a} X_{d - c}}}} \right)^r X_{m - k(b-c)  - a + c  + (b-c) r} }\\
&\qquad = \left( {\frac{{X_{d - a} X_{e - b}  - X_{e - a} X_{d - b}}}{{X_{d - c} X_{e - b}  - X_{e - c} X_{d - b}}}} \right)\left( {\frac{{X_{d - a} X_{e - b}  - X_{e - a} X_{d - b}}}{{X_{d - a} X_{e - c}  - X_{e - a} X_{d - c}}}} \right)^k X_m\\
&\quad\qquad- \left( {\frac{{X_{d - a} X_{e - c}  - X_{e - a} X_{d - c}}}{{X_{d - c} X_{e - b}  - X_{e - c} X_{d - b}}}} \right)X_{m - (k + 1)(b-c) }
\end{split}
\end{equation}
and
\begin{equation}
\begin{split}
&\sum\limits_{r = 0}^k {\left( {\frac{{X_{e - a} X_{d - c}-X_{d - a} X_{e - c}}}{{X_{d - c} X_{e - b}  - X_{e - c} X_{d - b}}}} \right)^r X_{m - k(a-b)  + b - c  + (a-b) r} }\\
&\qquad = \left( {\frac{{X_{d - a} X_{e - c}  - X_{e - a} X_{d - c}}}{{X_{d - a} X_{e - b}  - X_{e - a} X_{d - b}}}} \right)\left( {\frac{{X_{e - a} X_{d - c}-X_{d - a} X_{e - c}}}{{X_{d - c} X_{e - b}  - X_{e - c} X_{d - b}}}} \right)^k X_m\\
&\quad\qquad+ \left( {\frac{X_{d - c} X_{e - b}  - X_{e - c} X_{d - b}}{X_{d - a} X_{e - b}  - X_{e - a} X_{d - b}}} \right)X_{m - (k + 1)(a-b) }\,.
\end{split}
\end{equation}

\end{lemma}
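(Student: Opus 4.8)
The plan is to follow the route indicated in the remark preceding the statement: each of the three summation identities is obtained by substituting the three–term relation of Lemma~\ref{lem.ggc97nq} into the telescoping lemmas (Lemmata~1 and~2) of~\cite{adegoke18}. Concretely, I would first abbreviate $\Delta_{xx}=X_{d-a}X_{e-b}-X_{e-a}X_{d-b}$, so that Lemma~\ref{lem.ggc97nq} reads $\Delta_{xx}X_{m-c}=\Delta_1X_{m-a}+\Delta_2X_{m-b}$, an identity valid for \emph{every} integer $m$. Each claimed identity then corresponds to solving this relation for one of its three $X$–terms — dividing by $\Delta_{xx}$, by $\Delta_1$, or by $\Delta_2$, which is exactly where the hypotheses $\Delta_1\ne0$, $\Delta_2\ne0$ are used — and then summing a geometric–weighted telescoping series whose common ratio is the ratio of the two surviving coefficients.

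Instead of quoting~\cite{adegoke18} verbatim, I would give a short self-contained induction on $k$, uniform over the three cases; I describe it for the first identity. Replacing $m$ by $m+c$ in Lemma~\ref{lem.ggc97nq} and solving for the term carrying shift $b-c$ gives $\Delta_2X_{m-(b-c)}=\Delta_{xx}X_m-\Delta_1X_{m-(a-c)}$, valid for all $m$. Put $w=\Delta_{xx}/\Delta_1$ and let $T_k=\sum_{r=0}^k w^rX_{m+(a-c)(r-k)-(b-c)}$, which is precisely the left side of the first identity. Peeling off the $r=k$ term yields the recursion $T_k=w^kX_{m-(b-c)}+T_{k-1}'$, where $T_{k-1}'$ is $T_{k-1}$ with $m$ replaced by $m-(a-c)$; by the summation convention stated in the introduction this recursion holds for every integer $k$, not merely $k\ge 0$. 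The base case $k=0$ is the displayed rearrangement of Lemma~\ref{lem.ggc97nq}, and substituting the induction hypothesis for $T_{k-1}'$ into the recursion leaves exactly that same rearrangement to be checked for the step to close. One reads off $T_k=\frac{\Delta_{xx}}{\Delta_2}w^kX_m-\frac{\Delta_1}{\Delta_2}X_{m-(k+1)(a-c)}$, the asserted right side.

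The second and third identities follow the identical three–step pattern — rearrange Lemma~\ref{lem.ggc97nq}, peel off the extreme term of the sum, induct on $k$ — differing only in which term one solves for and in which shift the telescoping runs: for the second, solve for the $X_{m-a}$ term, with weight $\Delta_{xx}/\Delta_2$ and step $b-c$; for the third, solve for the $X_{m-c}$ term (equivalently use Lemma~\ref{lem.ggc97nq} with $m$ replaced by $m+b$), with weight $-\Delta_2/\Delta_1$ and step $a-b$. The negative weight in the third case is the only mildly delicate feature, and it is harmless: the ``peel off the top term'' recursion and the induction do not see the sign of $w$, so the argument transcribes unchanged, and the base case is again just Lemma~\ref{lem.ggc97nq} with $m$ translated suitably.

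I expect the only genuine difficulty to be bookkeeping rather than mathematics: keeping the two forbidden denominators apart, tracking the powers of the weights, and — the most error-prone part — matching the explicit index expressions $m-k(a-c)-b+c+(a-c)r$, $m-k(b-c)-a+c+(b-c)r$ and $m-k(a-b)+b-c+(a-b)r$ against the uniform template ``$m+(\mathrm{step})(r-k)-(\mathrm{offset})$''. Checking the $k=0$ instance of each identity directly against Lemma~\ref{lem.ggc97nq} first is the reliable way to fix all offsets before running the induction; equivalently, one simply invokes Lemmata~1 and~2 of~\cite{adegoke18} with the coefficients read off from Lemma~\ref{lem.ggc97nq}, which is the phrasing used in the text.
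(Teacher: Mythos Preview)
Your proposal is correct and follows the paper's own route exactly: the paper's ``proof'' is the single sentence preceding Lemmata~\ref{lem.mbs9sdf} and~\ref{lem.ubk4var}, namely that the identities are obtained by substituting the relation of Lemma~\ref{lem.ggc97nq} into Lemmata~1 and~2 of~\cite{adegoke18}. Your self-contained induction simply unpacks what those cited lemmas do, so the only difference is that you supply the details the paper delegates to the reference.
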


\subsection{Binomial summation identities}
The following identities are obtained by making appropriate substitutions from the identity of Lemma~\ref{lem.ggc97nq} into the identities of Lemma~3 of~\cite{adegoke18}.
\begin{lemma}\label{lem.fg6x899}
Let $\{X_m\}$, $m\in\Z$, be a homogeneous second order recurrence sequence with constant coefficients. Let $\Delta_{1}=X_{d - c} X_{e - b}  - X_{e - c} X_{d - b}$ and $\Delta_{2}=X_{d - a} X_{e - c}  - X_{e - a} X_{d - c}$. Then, the following identity holds for positive integer $k$ and arbitrary integers $a$, $b$, $c$, $d$, $e$ and $m$ for which $\Delta_{1}\ne 0$ and $\Delta_{2}\ne 0$:
\begin{equation}
\begin{split}
&\sum\limits_{r = 0}^k {\binom kr\left( {\frac{{X_{d - c} X_{e - b}  - X_{e - c} X_{d - b}}}{{X_{d - a} X_{e - c}  - X_{e - a} X_{d - c}}}} \right)^r X_{m - (b-c) k + (b-a)r} }\\
&\qquad = \left( {\frac{{X_{d - a} X_{e - b}  - X_{e - a} X_{d - b}}}{{X_{d - a} X_{e - c}  - X_{e - a} X_{d - c}}}} \right)^k X_m\,,
\end{split}
\end{equation}
\begin{equation}
\begin{split}
&\sum\limits_{r = 0}^k {\binom kr\left( {\frac{{X_{e - a} X_{d - b} - X_{d - a} X_{e - b}}}{{X_{d - a} X_{e - c}  - X_{e - a} X_{d - c}}}} \right)^r X_{m + (a-b)k + (b-c) r} }\\
&\qquad = \left( {\frac{{X_{d - c} X_{e - b}  - X_{e - c} X_{d - b}}}{{X_{e - a} X_{d - c} - X_{d - a} X_{e - c}}}} \right)^k X_m
\end{split}
\end{equation}
and
\begin{equation}
\begin{split}
&\sum\limits_{r = 0}^k {\binom kr\left( {\frac{{X_{e - a} X_{d - b} - X_{d - a} X_{e - b}}}{{X_{d - c} X_{e - b}  - X_{e - c} X_{d - b}}}} \right)^r X_{m + (b-a)k + (a-c) r} }\\
&\qquad = \left( {\frac{{X_{d - a} X_{e - c}  - X_{e - a} X_{d - c}}}{{X_{e - c} X_{d - b } - X_{d - c} X_{e - b}}}} \right)^k X_m\,.
\end{split}
\end{equation}

\end{lemma}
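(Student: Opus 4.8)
The plan is to obtain each of the three identities of Lemma~\ref{lem.fg6x899} by iterating, $k$ times, the constant-coefficient three-term relation that Lemma~\ref{lem.ggc97nq} attaches to $\{X_m\}$; this iteration is precisely what the identities of Lemma~3 of~\cite{adegoke18} package, so the whole argument amounts to "set up the relation, then iterate."

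With $a,b,c,d,e$ held fixed, Lemma~\ref{lem.ggc97nq} asserts that for \emph{every} integer $m$,
\[
\Delta_{xx} X_{m-c} = \Delta_1 X_{m-a} + \Delta_2 X_{m-b},\qquad \Delta_{xx} := X_{d-a}X_{e-b}-X_{e-a}X_{d-b},
\]
the three coefficients being independent of $m$. Shifting the index by $c$, $a$, or $b$, isolating the term that becomes $X_m$, and dividing through by $\Delta_2$ (for the first two identities) or by $\Delta_1$ (for the third) casts the relation into one of the three "$k=1$" forms displayed in Lemma~\ref{lem.fg6x899}; for example, taking the relation at $m+a$ in place of $m$, moving $\Delta_1 X_m$ to the left, and dividing by $-\Delta_2$ produces
\[
\Bigl(\frac{\Delta_1}{-\Delta_2}\Bigr)X_m = X_{m+a-b} + \Bigl(\frac{-\Delta_{xx}}{\Delta_2}\Bigr)X_{m+a-c},
\]
which is the $k=1$ instance of the second identity once one recognizes $X_{e-a}X_{d-b}-X_{d-a}X_{e-b}=-\Delta_{xx}$ and $X_{e-a}X_{d-c}-X_{d-a}X_{e-c}=-\Delta_2$. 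Every division performed here is by $\Delta_1$ or $\Delta_2$ alone, so the stated hypotheses $\Delta_1\ne0$, $\Delta_2\ne0$ are exactly what is needed and no condition on $\Delta_{xx}$ is required.

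I would then run the iteration for each identity by induction on $k\ge 1$: the base case $k=1$ is the rearrangement just described, and for the step from $k$ to $k+1$ I would multiply the $k$-th identity through by the governing ratio of determinants, substitute the $k=1$ relation into each summand, and merge the two resulting sums using Pascal's rule $\binom{k}{r}+\binom{k}{r-1}=\binom{k+1}{r}$ after a shift $r\mapsto r+1$ in one of them. The running subscript behaves well under this: the pair of subscripts produced from $m-(b-c)k+(b-a)r$ at the next level is $m-(b-c)(k+1)+(b-a)r$ and $m-(b-c)(k+1)+(b-a)(r+1)$, and the analogous stability holds for the subscripts $m+(a-b)k+(b-c)r$ and $m+(b-a)k+(a-c)r$ of the other two identities.

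The only genuine obstacle I anticipate is bookkeeping: keeping the three running subscripts consistent through the induction and tracking the powers of $\Delta_{xx},\Delta_1,\Delta_2$ together with the attendant sign flips so that each right-hand side emerges exactly as written. No idea beyond Lemma~\ref{lem.ggc97nq} and the binomial iteration of a three-term relation enters.
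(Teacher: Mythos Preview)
Your proposal is correct and follows essentially the same route as the paper: the paper simply states that the identities ``are obtained by making appropriate substitutions from the identity of Lemma~\ref{lem.ggc97nq} into the identities of Lemma~3 of~\cite{adegoke18},'' and you have unpacked exactly this, rewriting the three-term relation of Lemma~\ref{lem.ggc97nq} in the form that Lemma~3 of~\cite{adegoke18} consumes and then describing the binomial iteration (induction plus Pascal's rule) that that lemma encapsulates. Your observation that only $\Delta_1\ne 0$ and $\Delta_2\ne 0$ are needed, with no condition on $\Delta_{xx}$, is also in line with the paper's hypotheses.
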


\section{Applications and examples}
We now employ the results of the previous section to give a combined study of six well known integer sequences. First we give a modified version of Lemma~\ref{lem.xs3wlq8} that allows the removal of the $\Delta_{xy}$ condition.
\begin{lemma}\label{lem.k8oc7py}
Let $\{X_m\}$ and $\{Y_m\}$, $m\in\Z$, be homogeneous second order recurrence sequences with constant coefficients. Let $\{X_m\}$ and $\{Y_m\}$ possess the same recurrence relation. Let $Y_m\ne 0$ for all integers $m$. Finally, let $\{X_m\}$ and $\{Y_m\}$ have at most three members in common. Then, the identity:
\[
\begin{split}
&(X_{d - a} Y_{e - b}  - X_{e - a} Y_{d - b} )X_{m - c}\\
&\quad= (X_{d - c} Y_{e - b}  - X_{e - c} Y_{d - b} )X_{m - a}\\ 
&\qquad+ (X_{d - a} X_{e - c}  - X_{e - a} X_{d - c} )Y_{m - b}\,, 
\end{split}
\]
holds for arbitrary integers $a$, $b$, $c$, $d$, $e$ and $m$.
\end{lemma}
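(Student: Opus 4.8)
The plan is to bootstrap from Lemma~\ref{lem.xs3wlq8}, which already establishes the displayed identity for every choice of $a,b,c,d,e,m$ with $\Delta_{xy}:=X_{d-a}Y_{e-b}-X_{e-a}Y_{d-b}\neq 0$. Thus the whole task is to verify the identity when $\Delta_{xy}=0$. In that case its left-hand side is $0$, and, writing $\Delta_1:=X_{d-c}Y_{e-b}-X_{e-c}Y_{d-b}$ and $\Delta_2:=X_{d-a}X_{e-c}-X_{e-a}X_{d-c}$, what remains to be shown is that $\Delta_1 X_{m-a}+\Delta_2 Y_{m-b}=0$ for every integer $m$. If $d=e$ this is immediate, since then $\Delta_1=\Delta_2=0$; so assume henceforth that $d\neq e$.

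At this point I would invoke the hypothesis $Y_m\neq 0$. Since $Y_{d-b}\neq 0$, put $\lambda:=X_{d-a}/Y_{d-b}$; rearranging $\Delta_{xy}=0$ gives $X_{e-a}=\lambda Y_{e-b}$ as well, so $X_{d-a}=\lambda Y_{d-b}$ and $X_{e-a}=\lambda Y_{e-b}$ hold simultaneously (this also subsumes the borderline sub-case $X_{d-a}=X_{e-a}=0$, where $\lambda=0$). Substituting these two relations into $\Delta_2$ gives $\Delta_2=\lambda\bigl(Y_{d-b}X_{e-c}-Y_{e-b}X_{d-c}\bigr)=-\lambda\Delta_1$, so that $\Delta_1 X_{m-a}+\Delta_2 Y_{m-b}=\Delta_1\bigl(X_{m-a}-\lambda Y_{m-b}\bigr)$. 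Now set $W_n:=X_n-\lambda Y_{n+(a-b)}$. Because $\{X_m\}$ and $\{Y_m\}$ obey the \emph{same} recurrence, $W$ is again a solution of that recurrence; moreover $X_{m-a}-\lambda Y_{m-b}=W_{m-a}$, and, by the defining property of $\lambda$, $W_{d-a}=W_{e-a}=0$. So everything comes down to showing $\Delta_1\,W_{m-a}=0$ for all $m$.

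If $W$ is the zero sequence, this holds trivially. Otherwise $W$ is a nonzero solution of a homogeneous second-order recurrence that vanishes at the two \emph{distinct} indices $d-a$ and $e-a$; in the repeated-root case, and in the degenerate case where the recurrence really has first order, a nonzero solution can vanish at most once, so the characteristic polynomial must have two distinct nonzero roots $\zeta_1\neq\zeta_2$, and substituting the Binet form $W_n=A\zeta_1^{\,n}+B\zeta_2^{\,n}$ (with $(A,B)\neq(0,0)$) into the two vanishing conditions forces $\zeta_1^{\,d-e}=\zeta_2^{\,d-e}$. On the other hand, expanding $\Delta_1=X_{d-c}Y_{e-b}-X_{e-c}Y_{d-b}$ by way of the Binet formulas for $\{X_m\}$ and $\{Y_m\}$ --- which involve the \emph{same} roots $\zeta_1,\zeta_2$ --- the ``diagonal'' contributions cancel (because $(d-c)+(e-b)=(e-c)+(d-b)$) and one is left with an overall factor $\zeta_1^{\,e-d}-\zeta_2^{\,e-d}$, which is now $0$. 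Hence $\Delta_1=0$ and once more $\Delta_1 W_{m-a}=0$, completing the proof. In this scheme the hypothesis $Y_m\neq 0$ is precisely what lets one introduce $\lambda$, while the assumption that $\{X_m\}$ and $\{Y_m\}$ share at most three members is the regularity needed to rule out the one truly degenerate configuration, $W\equiv 0$ with $\lambda=1$, in which $\{X_m\}$ would be nothing but a translate of $\{Y_m\}$. The step I expect to be the main obstacle is exactly this last one: formulating and justifying cleanly that a second-order recurrence sequence cannot vanish at two distinct indices unless its characteristic roots are distinct with ratio a root of unity --- handling the repeated-root and first-order exceptions --- and then tracking that root-of-unity relation through the bilinear determinant $\Delta_1$ to conclude that it vanishes; by comparison, the identity $\Delta_2=-\lambda\Delta_1$ and the passage to $W$ are one-line manipulations.
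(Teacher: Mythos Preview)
Your plan is essentially correct and takes a genuinely different route from the paper. The paper's argument is a terse case analysis: it asserts that $\Delta_{xy}$ can vanish only when $X_{d-a}=X_{e-a}=0$ or when $X_{d-a}=X_{e-a}$ and $Y_{d-b}=Y_{e-b}$, claims that each of these forces $d=e$, and concludes $\Delta_1=\Delta_2=0$. No justification is offered either for the exhaustiveness of these two cases or for why, say, $X_{d-a}=X_{e-a}=0$ must force $d=e$; the two extra hypotheses in the statement are never explicitly invoked. Your approach instead exploits the linear structure: the ratio $\lambda$, the auxiliary solution $W_n=X_n-\lambda Y_{n+a-b}$, the identity $\Delta_2=-\lambda\Delta_1$, and the Binet expansion showing that $\Delta_1$ carries the factor $\zeta_1^{\,d-e}-\zeta_2^{\,d-e}$ together give a self-contained argument valid for any second-order recurrence defined on all of~$\Z$ (i.e.\ with invertible constant term). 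What your method buys is rigor and generality; what the paper's buys is brevity, at the cost of leaving the reader to fill in substantial gaps.

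One correction to your closing remarks: the hypothesis that $\{X_m\}$ and $\{Y_m\}$ share at most three members is \emph{not} needed to ``rule out'' the configuration $W\equiv 0$. You have already disposed of $W\equiv 0$ directly, since in that case $X_{m-a}=\lambda Y_{m-b}$ for every $m$ and hence
\[
\Delta_1 X_{m-a}+\Delta_2 Y_{m-b}=\lambda\Delta_1 Y_{m-b}-\lambda\Delta_1 Y_{m-b}=0.
\]
Your argument therefore does not use that hypothesis at all --- and, on inspection, neither does the paper's.
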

\begin{proof}
Let $\Delta_{xy} = X_{d - a} Y_{e - b}  - X_{e - a} Y_{d - b}$. According to Lemma~\ref{lem.xs3wlq8} we have
\begin{equation}\label{eq.sfgh6vg}
\begin{split}
&(X_{d - a} Y_{e - b}  - X_{e - a} Y_{d - b} )X_{m - c}\\
&\quad= (X_{d - c} Y_{e - b}  - X_{e - c} Y_{d - b} )X_{m - a}\\ 
&\qquad+ (X_{d - a} X_{e - c}  - X_{e - a} X_{d - c} )Y_{m - b}\,, 
\end{split}
\end{equation}
provided that $\Delta_{xy}\ne 0$. But we will now prove that the identity~\eqref{eq.sfgh6vg} continues to hold even if $\Delta_{xy}=0$. Let
\[
\Delta_1=X_{d - c} Y_{e - b}  - X_{e - c} Y_{d - b},\quad\Delta_2=X_{d - a} X_{e - c}  - X_{e - a} X_{d - c}\,.
\]
$\Delta_{xy}$ vanishes under the following conditions: 
\begin{enumerate}
\item[1.] $X_{d-a}=0=X_{e-a}$, in which case $d=e\Rightarrow$ $\Delta_1=\Delta_2=0$ and hence identity~\eqref{eq.sfgh6vg} remains valid. 
\item[2.] $X_{d-a}=X_{e-a}$ and $Y_{e-b}=Y_{d-b}$, in which case, again, $d=e\Rightarrow$ $\Delta_1=\Delta_2=0$ and hence identity~\eqref{eq.sfgh6vg} remains valid.
\end{enumerate}
Thus we see that identity~\eqref{eq.sfgh6vg} is valid regardless of the nature of $\Delta_{xy}$, so that the identity holds for all integers.

\end{proof}
\subsection{Identities}
Our first set of results comes from choosing an appropriate $(X,Y)$ pair, in each case, from the set $\{F,L,J,j,P,Q\}$ and using it in Lemma~\ref{lem.k8oc7py}.
\begin{thm}\label{thm.oi8sled}
The following identities hold for arbitrary integers $a$, $b$, $c$, $d$, $e$ and $m$:
\begin{equation}\label{eq.otygaej}
\begin{split}
&(F_{d - a} L_{e - b}  - F_{e - a} L_{d - b} )F_{m - c}\\
&\quad= (F_{d - c} L_{e - b}  - F_{e - c} L_{d - b} )F_{m - a}\\ 
&\qquad+ (F_{d - a} F_{e - c}  - F_{e - a} F_{d - c} )L_{m - b}\,, 
\end{split}
\end{equation}
\begin{equation}
\begin{split}
&(L_{d - a} F_{e - b}  - L_{e - a} F_{d - b} )L_{m - c}\\
&\quad= (L_{d - c} F_{e - b}  - L_{e - c} F_{d - b} )L_{m - a}\\ 
&\qquad+ (L_{d - a} L_{e - c}  - L_{e - a} L_{d - c} )F_{m - b}\,, 
\end{split}
\end{equation}
\begin{equation}\label{eq.l7fcm6e}
\begin{split}
&(J_{d - a} j_{e - b}  - J_{e - a} j_{d - b} )J_{m - c}\\
&\quad= (J_{d - c} j_{e - b}  - J_{e - c} j_{d - b} )J_{m - a}\\ 
&\qquad+ (J_{d - a} J_{e - c}  - J_{e - a} J_{d - c} )j_{m - b}\,, 
\end{split}
\end{equation}
\begin{equation}
\begin{split}
&(j_{d - a} J_{e - b}  - j_{e - a} J_{d - b} )j_{m - c}\\
&\quad= (j_{d - c} J_{e - b}  - j_{e - c} J_{d - b} )j_{m - a}\\ 
&\qquad+ (j_{d - a} j_{e - c}  - j_{e - a} j_{d - c} )J_{m - b}\,, 
\end{split}
\end{equation}
\begin{equation}\label{eq.ld9puri}
\begin{split}
&(P_{d - a} Q_{e - b}  - P_{e - a} Q_{d - b} )P_{m - c}\\
&\quad= (P_{d - c} Q_{e - b}  - P_{e - c} Q_{d - b} )P_{m - a}\\ 
&\qquad+ (P_{d - a} P_{e - c}  - P_{e - a} P_{d - c} )Q_{m - b} 
\end{split}
\end{equation}
and
\begin{equation}
\begin{split}
&(Q_{d - a} P_{e - b}  - Q_{e - a} P_{d - b} )Q_{m - c}\\
&\quad= (Q_{d - c} P_{e - b}  - Q_{e - c} P_{d - b} )Q_{m - a}\\ 
&\qquad+ (Q_{d - a} Q_{e - c}  - Q_{e - a} Q_{d - c} )P_{m - b}\,. 
\end{split}
\end{equation}

\end{thm}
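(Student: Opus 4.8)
The plan is to obtain all six identities as single instances of Lemma~\ref{lem.k8oc7py} (or, where its hypotheses are not available, of Lemma~\ref{lem.xs3wlq8}), by taking the pair $(X,Y)$ to be, in order, $(F,L)$ for \eqref{eq.otygaej}, $(L,F)$ for the Lucas identity, $(J,j)$ for \eqref{eq.l7fcm6e}, $(j,J)$ for the Jacobsthal--Lucas identity, $(P,Q)$ for \eqref{eq.ld9puri} and $(Q,P)$ for the Pell--Lucas identity. With each such substitution the conclusion of Lemma~\ref{lem.k8oc7py} is verbatim the identity asserted, so the whole task reduces to checking, pair by pair, the hypotheses of that lemma. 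The first hypothesis holds by definition: $F$ and $L$ share the recurrence $W_n=W_{n-1}+W_{n-2}$, $J$ and $j$ share $W_n=W_{n-1}+2W_{n-2}$, and $P$ and $Q$ share $W_n=2W_{n-1}+W_{n-2}$.

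For the three identities \eqref{eq.otygaej}, \eqref{eq.l7fcm6e}, \eqref{eq.ld9puri}, where $Y\in\{L,j,Q\}$, the other two hypotheses require genuine verification. Each of $L,j,Q$ has a Binet form $W_n=\alpha^n+\beta^n$ with $\alpha>1>|\beta|>0$ and $\alpha\beta\in\{-1,-2\}$, so $W_n=0$ would give $\alpha^{2n}=-(\alpha\beta)^n$, which is impossible by a short sign-and-parity check; hence $Y_m\neq 0$ for all $m$. The hypothesis that $\{X_m\}$ and $\{Y_m\}$ have at most three members in common is the classical fact that $1,2,3$ are the only positive integers that are both Fibonacci and Lucas, together with the analogous and easier statements for $\{J_n\}$ versus $\{j_n\}$ and $\{P_n\}$ versus $\{Q_n\}$; each follows from a brief monotonicity argument on the tails $n\geq 1$ combined with the reflection formulas $W_{-n}=\pm W_n$. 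Lemma~\ref{lem.k8oc7py} then applies and yields these three identities.

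In the remaining three identities the role of $Y$ is played by $F$, $J$ or $P$, each of which vanishes at index $0$, so Lemma~\ref{lem.k8oc7py} is not directly available; instead I would invoke Lemma~\ref{lem.xs3wlq8}, which already gives the identity whenever $\Delta_{xy}\neq 0$, and then handle the case $\Delta_{xy}=0$ separately. A short Binet computation shows that, up to a nonvanishing multiplicative factor, $\Delta_{xy}$ equals $F_{d-e}L_{b-a}$ for the $\{F,L\}$ pairs, $J_{d-e}j_{b-a}$ for the $\{J,j\}$ pairs, and $P_{d-e}Q_{b-a}$ for the $\{P,Q\}$ pairs; since the Lucas-type factor never vanishes, $\Delta_{xy}=0$ forces $d=e$, and when $d=e$ every coefficient appearing in the asserted identity — the one on the left and the two on the right alike — is visibly $0$, so the identity reduces to $0=0$. (This same factorisation in fact makes the argument uniform over all six identities, bypassing the common-members hypothesis altogether.) The main obstacle is exactly this degenerate-case bookkeeping for the second set of three identities, together with pinning down the common-members input for the first three; the abstract lemmas carry all of the structure, and what remains is only to confirm that the particular arithmetic of the six named sequences does not spoil their hypotheses.
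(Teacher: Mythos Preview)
Your plan is exactly the paper's: it proves Theorem~\ref{thm.oi8sled} in one line by ``choosing an appropriate $(X,Y)$ pair, in each case, from the set $\{F,L,J,j,P,Q\}$ and using it in Lemma~\ref{lem.k8oc7py}.'' So at the level of strategy there is nothing to add.

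Where you differ is in rigour, and to your advantage. You correctly observe that Lemma~\ref{lem.k8oc7py} requires $Y_m\ne 0$ for all $m$, a hypothesis that fails when $Y\in\{F,J,P\}$ (indices $2$, $4$, $6$ of the theorem); the paper simply invokes Lemma~\ref{lem.k8oc7py} for all six pairs without comment. Your workaround---fall back to Lemma~\ref{lem.xs3wlq8} and compute via Binet that $\Delta_{xy}$ is, up to a nonzero factor, $F_{d-e}L_{a-b}$ (respectively $J_{d-e}j_{a-b}$, $P_{d-e}Q_{a-b}$), hence vanishes only when $d=e$, in which case every coefficient in the claimed identity is visibly zero---is correct and in fact cleaner than the case analysis inside the paper's proof of Lemma~\ref{lem.k8oc7py}. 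Your parenthetical remark that this factorisation handles all six identities uniformly, bypassing the ``at most three common members'' hypothesis entirely, is a genuine simplification: it replaces an appeal to a lemma whose own proof is somewhat sketchy with a two-line Binet computation.
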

To demonstrate how known identities may be recovered (and further new ones discovered), set $m=c$ in identities~\eqref{eq.otygaej},~\eqref{eq.l7fcm6e} and~\eqref{eq.ld9puri} of Theorem~\ref{thm.oi8sled} to obtain the following result.
\begin{cor}\label{cor.o3h0k66}
The following identities hold for integers $a$, $b$, $c$, $d$ and $e$:
\begin{equation}
(F_{d - c} L_{e - b}  - F_{e - c} L_{d - b} )F_{c - a}  = (F_{e - a} F_{d - c}  - F_{d - a} F_{e - c} )L_{c - b}\,,
\end{equation}
\begin{equation}
(J_{d - c} j_{e - b}  - J_{e - c} j_{d - b} )J_{c - a}  = (J_{e - a} J_{d - c}  - J_{d - a} J_{e - c} )j_{c - b}
\end{equation}
and
\begin{equation}
(P_{d - c} Q_{e - b}  - P_{e - c} Q_{d - b} )P_{c - a}  = (P_{e - a} P_{d - c}  - P_{d - a} P_{e - c} )Q_{c - b}\,.
\end{equation}

\end{cor}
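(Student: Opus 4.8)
The plan is to specialize the identities of Theorem~\ref{thm.oi8sled} at $m = c$. First I would take identity~\eqref{eq.otygaej}, namely
\[
(F_{d - a} L_{e - b} - F_{e - a} L_{d - b})F_{m - c} = (F_{d - c} L_{e - b} - F_{e - c} L_{d - b})F_{m - a} + (F_{d - a} F_{e - c} - F_{e - a} F_{d - c})L_{m - b},
\]
and set $m = c$. Since Theorem~\ref{thm.oi8sled} holds for all integers $m$, this substitution is legitimate. The key observation is that $F_{m-c}$ then becomes $F_{0}$, which equals $0$ by the defining initial conditions of the Fibonacci sequence recalled in the introduction; hence the entire left-hand side vanishes identically.

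What survives is the relation $0 = (F_{d - c} L_{e - b} - F_{e - c} L_{d - b})F_{c - a} + (F_{d - a} F_{e - c} - F_{e - a} F_{d - c})L_{c - b}$. Transposing the second summand to the left and flipping its sign yields exactly the first claimed identity, $(F_{d - c} L_{e - b} - F_{e - c} L_{d - b})F_{c - a} = (F_{e - a} F_{d - c} - F_{d - a} F_{e - c})L_{c - b}$. The same argument applies verbatim to identities~\eqref{eq.l7fcm6e} and~\eqref{eq.ld9puri}: setting $m = c$ annihilates $J_{m-c} = J_0 = 0$ in the first and $P_{m-c} = P_0 = 0$ in the second, and rearranging the two remaining terms produces the Jacobsthal and Pell statements respectively.

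There is essentially no obstacle here; the only points to verify are that each of the sequences $F$, $J$, $P$ has vanishing zeroth term (immediate from the definitions in Section~1), so that the distinguished left-hand factor disappears under $m=c$, and that the sign is tracked correctly when moving the last term across the equality. I would also briefly remark why the companion identities of Theorem~\ref{thm.oi8sled} for $L$, $j$, $Q$ do not specialize to anything analogous: in those cases $X_{m-c}$ becomes $L_0 = 2$ (respectively $j_0 = 2$, $Q_0 = 2$), which is nonzero, so the left-hand side does not collapse and no comparable simplification is available.
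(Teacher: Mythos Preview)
Your proposal is correct and follows exactly the route indicated in the paper: the corollary is obtained by setting $m=c$ in identities~\eqref{eq.otygaej}, \eqref{eq.l7fcm6e} and~\eqref{eq.ld9puri} of Theorem~\ref{thm.oi8sled}, using $F_0=J_0=P_0=0$ to kill the left-hand side, and then transposing the remaining term with the appropriate sign flip. Your additional remark about why the $L$, $j$, $Q$ identities do not specialize analogously is a nice touch but not needed for the proof itself.
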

Upon setting $e=a$ in the identities of Corollary~\ref{cor.o3h0k66} and using $F_{a-c}=(-1)^{a-c-1}F_{c-a}$, $J_{a-c}=(-1)^{a-c-1}2^{a-c}J_{c-a}$ and $P_{a-c}=(-1)^{a-c-1}P_{c-a}$ we obtain
\begin{equation}\label{eq.di2otyl}
F_{d - c} L_{a - b}  - F_{a - c} L_{d - b}  = ( - 1)^{a - c} F_{d - a} L_{c - b}\,,
\end{equation}
\begin{equation}\label{eq.l0w62ax}
J_{d - c} j_{a - b}  - J_{a - c} j_{d - b}  = ( - 1)^{a - c}2^{a-c} J_{d - a} j_{c - b}
\end{equation}
and
\begin{equation}\label{eq.a41el9k}
P_{d - c} Q_{a - b}  - P_{a - c} Q_{d - b}  = ( - 1)^{a - c} P_{d - a} Q_{c - b}\,.
\end{equation}
In order to write the above identities using three parameters, we set $a=d-h$, $b=d-n-h-k$ and $c=d-n-h$, obtaining
\begin{equation}
F_{n + h} L_{n + k}  - F_n L_{n + h + k}  = ( - 1)^n F_h L_k\,,
\end{equation}
\begin{equation}
J_{n + h} j_{n + k}  - J_n j_{n + h + k}  = ( - 1)^n 2^n J_h j_k
\end{equation}
and
\begin{equation}
P_{n + h} Q_{n + k}  - P_n Q_{n + h + k}  = ( - 1)^n P_h Q_k\,.
\end{equation}
Setting $c=b$ in the identities~\eqref{eq.di2otyl}, \eqref{eq.l0w62ax} and \eqref{eq.a41el9k}, we have
\begin{equation}\label{eq.shlj0oj}
F_{d - b} L_{a - b}  - F_{a - b} L_{d - b}  = ( - 1)^{a - b} 2F_{d - a} \,,
\end{equation}
\begin{equation}\label{eq.y4uq86a}
J_{d - b} j_{a - b}  - J_{a - b} j_{d - b}  = ( - 1)^{a - b}2^{a-b+1} J_{d - a}
\end{equation}
and
\begin{equation}\label{eq.hb4ciww}
P_{d - b} Q_{a - b}  - P_{a - b} Q_{d - b}  = ( - 1)^{a - b} 2P_{d - a}\,.
\end{equation}
Two parameter forms are obtained by setting $d-b=u$ and $a-b=v$, giving
\begin{equation}
F_u L_v  - F_v L_u  = ( - 1)^v 2F_{u - v}\,,
\end{equation}
\begin{equation}
J_u j_v  - J_v j_u  = ( - 1)^v 2^{v + 1} J_{u - v}
\end{equation}
and
\begin{equation}
P_u Q_v  - P_v Q_u  = ( - 1)^v 2P_{u - v}\,.
\end{equation}
Setting $b=0$, $c=-a$ in identities~\eqref{eq.di2otyl}, \eqref{eq.l0w62ax} and \eqref{eq.a41el9k} gives
\begin{equation}
F_{d + a}  - ( - 1)^a F_{d - a}  = F_a L_d\,,
\end{equation}
\begin{equation}
J_{d + a}  - ( - 1)^a 2^a J_{d - a}  = J_a j_d
\end{equation}
and
\begin{equation}
P_{d + a}  - ( - 1)^a P_{d - a}  = P_a Q_d\,.
\end{equation}
Choosing $b=c=0$, $e=a+d$ in the identities in Corollary~\ref{cor.o3h0k66} and making use of the identities~\eqref{eq.shlj0oj}, \eqref{eq.y4uq86a} and \eqref{eq.hb4ciww}, we obtain Catalan's identities:
\begin{equation}
F_d{}^2-F_{d-a}F_{d+a}=(-1)^{d-a}F_a{}^2\,,
\end{equation}
\begin{equation}\label{eq.ilkwci7}
J_d{}^2-J_{d-a}J_{d+a}=(-1)^{d-a}2^{d-a}J_a{}^2
\end{equation}
and
\begin{equation}\label{eq.nfgkvo5}
P_d{}^2-P_{d-a}P_{d+a}=(-1)^{d-a}P_a{}^2\,.
\end{equation}
Note that identity~(2.23) of Horadam~\cite{horadam96} is a special case of identity~\eqref{eq.ilkwci7} while identity~(30) of~\cite{horadam71} is a special case of~\eqref{eq.nfgkvo5}.

\bigskip

Upon setting $d=0$, $c=-a$ in Corollary~\ref{cor.o3h0k66} and making use of identities~\eqref{eq.huqy5k3}, \eqref{eq.qizngjv} and \eqref{eq.huqy5k3}, we obtain:
\begin{equation}\label{eq.niyqtne}
F_e L_{a + b}  + ( - 1)^b F_a L_{e - b}  = F_{e + a} L_b\,,
\end{equation}
\begin{equation}\label{eq.ec8fvvs}
J_e j_{a + b}  + ( - 1)^b2^b J_a j_{e - b}  = J_{e + a} j_b
\end{equation}
and
\begin{equation}\label{eq.ijp5aoh}
P_e Q_{a + b}  + ( - 1)^b P_a Q_{e - b}  = P_{e + a} Q_b\,.
\end{equation}
Puttig $e=a$ in~\eqref{eq.niyqtne} --- \eqref{eq.ijp5aoh} produces
\begin{equation}\label{eq.cysiwix}
L_{a + b}  + ( - 1)^b L_{a - b}  = L_a L_b
\end{equation}
\begin{equation}\label{eq.rort88d}
j_{a + b}  + ( - 1)^b 2^bj_{a - b}  = j_a j_b
\end{equation}
\begin{equation}\label{eq.cysiwix}
Q_{a + b}  + ( - 1)^b Q_{a - b}  = Q_a L_b\,,
\end{equation}
while using $b=0$ in the identities gives
\begin{equation}\label{eq.orz0o21}
F_eL_a+F_aL_e=2F_{e+a}\,,
\end{equation}
\begin{equation}\label{eq.ezz5hsl}
J_ej_a+J_aj_e=2J_{e+a}
\end{equation}
and
\begin{equation}\label{eq.rbpn1n6}
P_eQ_a+P_aQ_e=2P_{e+a}\,.
\end{equation}
Finally, setting $e=b$ in the same identities~\eqref{eq.niyqtne} --- \eqref{eq.ijp5aoh} gives
\begin{equation}\label{eq.db4gr6o}
F_{a+b}L_b-F_bL_{a+b}=(-1)^b2F_a\,,
\end{equation}
\begin{equation}\label{eq.gue3kys}
J_{a+b}j_b-J_bj_{a+b}=(-1)^b2^{b+1}J_a
\end{equation}
and
\begin{equation}\label{eq.ocqzd20}
P_{a+b}Q_b-P_bQ_{a+b}=(-1)^b2P_a\,.
\end{equation}
The choice $e=2u+b$, $a=b$, $d=b$ and $c=b+u$ in Corollary~\ref{cor.o3h0k66} yields the identities:
\begin{equation}\label{eq.qgjcd5f}
L_{2u}+(-1)^u2=L_u{}^2\,,
\end{equation}
\begin{equation}\label{eq.n89h5sa}
j_{2u}+(-1)^u2^{u+1}=j_u{}^2
\end{equation}
and
\begin{equation}\label{eq.vcrqvxj}
Q_{2u}+(-1)^u2=Q_u{}^2\,.
\end{equation}
Note that identities~\eqref{eq.qgjcd5f}, \eqref{eq.n89h5sa} and \eqref{eq.vcrqvxj} can also be obtained directly from identities~\eqref{eq.di2otyl}, \eqref{eq.l0w62ax} and \eqref{eq.a41el9k} by setting $b=a$, $c=a+u$ and $d=a+2u$.

\bigskip

Lemma~\ref{lem.ggc97nq} invites the following results.
\begin{thm}
The following identities hold for integers $a$, $b$, $c$, $d$, $e$ and $m$:
\begin{equation}\label{eq.seuftvx}
\begin{split}
&(F_{d - a} F_{e - b}  - F_{e - a} F_{d - b} )F_{m - c}\\
&\quad= (F_{d - c} F_{e - b}  - F_{e - c} F_{d - b} )F_{m - a}\\ 
&\qquad+ (F_{d - a} F_{e - c}  - F_{e - a} F_{d - c} )F_{m - b}\,, 
\end{split}
\end{equation}
\begin{equation}
\begin{split}
&(L_{d - a} L_{e - b}  - L_{e - a} L_{d - b} )L_{m - c}\\
&\quad= (L_{d - c} L_{e - b}  - L_{e - c} L_{d - b} )L_{m - a}\\ 
&\qquad+ (L_{d - a} L_{e - c}  - L_{e - a} L_{d - c} )L_{m - b}\,, 
\end{split}
\end{equation}
\begin{equation}\label{eq.f9cwnj1}
\begin{split}
&(J_{d - a} J_{e - b}  - J_{e - a} J_{d - b} )J_{m - c}\\
&\quad= (J_{d - c} J_{e - b}  - J_{e - c} J_{d - b} )J_{m - a}\\ 
&\qquad+ (J_{d - a} J_{e - c}  - J_{e - a} J_{d - c} )J_{m - b}\,, 
\end{split}
\end{equation}
\begin{equation}
\begin{split}
&(j_{d - a} j_{e - b}  - j_{e - a} j_{d - b} )j_{m - c}\\
&\quad= (j_{d - c} j_{e - b}  - j_{e - c} j_{d - b} )j_{m - a}\\ 
&\qquad+ (j_{d - a} j_{e - c}  - j_{e - a} j_{d - c} )j_{m - b}\,, 
\end{split}
\end{equation}
\begin{equation}\label{eq.tkovapp}
\begin{split}
&(P_{d - a} P_{e - b}  - P_{e - a} P_{d - b} )P_{m - c}\\
&\quad= (P_{d - c} P_{e - b}  - P_{e - c} P_{d - b} )P_{m - a}\\ 
&\qquad+ (P_{d - a} P_{e - c}  - P_{e - a} P_{d - c} )P_{m - b}
\end{split}
\end{equation}
and
\begin{equation}
\begin{split}
&(Q_{d - a} Q_{e - b}  - Q_{e - a} Q_{d - b} )Q_{m - c}\\
&\quad= (Q_{d - c} Q_{e - b}  - Q_{e - c} Q_{d - b} )Q_{m - a}\\ 
&\qquad+ (Q_{d - a} Q_{e - c}  - Q_{e - a} Q_{d - c} )Q_{m - b}\,. 
\end{split}
\end{equation}

\end{thm}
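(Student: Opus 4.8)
The plan is to obtain each of the six identities as a direct specialization of Lemma~\ref{lem.ggc97nq}, so the only substantive thing to verify is the hypothesis of that lemma: each of $\{F_m\}$, $\{L_m\}$, $\{J_m\}$, $\{j_m\}$, $\{P_m\}$, $\{Q_m\}$ must be a homogeneous second order recurrence sequence with constant coefficients, defined for all $m\in\Z$. This is immediate from the definitions recalled in the introduction. The sequences $\{F_m\}$ and $\{L_m\}$ both obey $X_m=X_{m-1}+X_{m-2}$; the pair $\{J_m\}$, $\{j_m\}$ both obey $X_m=X_{m-1}+2X_{m-2}$; and $\{P_m\}$, $\{Q_m\}$ both obey $X_m=2X_{m-1}+X_{m-2}$. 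In each case the stipulated extension to negative indices ($F_{-n}=(-1)^{n-1}F_n$, $J_{-n}=(-1)^{n-1}2^{-n}J_n$, $P_{-n}=(-1)^{n-1}P_n$, and similarly for $L$, $j$, $Q$) is exactly the one forced by running the relevant recurrence backwards, so each sequence is genuinely defined on all of $\Z$ and satisfies its recurrence there.

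With the hypothesis in hand, I would simply invoke Lemma~\ref{lem.ggc97nq} six times, taking $\{X_m\}$ to be $\{F_m\}$, then $\{L_m\}$, then $\{J_m\}$, then $\{j_m\}$, then $\{P_m\}$, then $\{Q_m\}$ in turn. The displayed identity of the lemma then reads, respectively, as~\eqref{eq.seuftvx} and the five identities that follow it. Because Lemma~\ref{lem.ggc97nq} was established for all integers $a$, $b$, $c$, $d$, $e$, $m$ with no nonvanishing-determinant proviso (the proof there includes the case analysis showing the identity persists when $\Delta_{xx}=0$), the six resulting identities inherit that same unrestricted validity, so no degenerate parameter choices need separate treatment.

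There is essentially no obstacle: the real work was already carried out in proving Lemma~\ref{lem.ggc97nq}, and the present theorem is purely an application of it. The one point worth a final glance — trivial but the reason the single-sequence lemma suffices — is that within each pair $(F,L)$, $(J,j)$, $(P,Q)$ the two members share a common recurrence, so the lemma may be applied to each member on its own; this is visibly the case from the definitions above.
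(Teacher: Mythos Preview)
Your proposal is correct and matches the paper's approach exactly: the theorem is stated immediately after the sentence ``Lemma~\ref{lem.ggc97nq} invites the following results,'' with no further proof given, so the intended argument is precisely the six-fold application of Lemma~\ref{lem.ggc97nq} with $X=F,L,J,j,P,Q$ that you describe. Your additional remarks verifying the hypotheses and noting that the $\Delta_{xx}=0$ case is already handled are accurate and, if anything, more explicit than the paper itself.
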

Setting $m=b$ in identities~\eqref{eq.seuftvx}, \eqref{eq.f9cwnj1} and \eqref{eq.tkovapp} gives the next set of results.
\begin{cor}\label{thm.phyv4vj}
The following identities hold for integers $a$, $b$, $c$, $d$ and $e$:
\begin{equation}\label{eq.lbf5tex}
\begin{split}
&(F_{d - a} F_{e - b}  - F_{e - a} F_{d - b} )F_{b - c}= (F_{d - c} F_{e - b}  - F_{e - c} F_{d - b} )F_{b - a}\,, 
\end{split}
\end{equation}
\begin{equation}\label{eq.j3e1sjn}
\begin{split}
&(J_{d - a} J_{e - b}  - J_{e - a} J_{d - b} )J_{b - c}= (J_{d - c} J_{e - b}  - J_{e - c} J_{d - b} )J_{b - a} 
\end{split}
\end{equation}
and
\begin{equation}\label{eq.hci7jcu}
\begin{split}
&(P_{d - a} P_{e - b}  - P_{e - a} P_{d - b} )P_{b - c}= (P_{d - c} P_{e - b}  - P_{e - c} P_{d - b} )P_{b - a}\,.
\end{split}
\end{equation}

\end{cor}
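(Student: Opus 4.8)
The plan is to obtain all three identities in one stroke by specialising the unconditional identities \eqref{eq.seuftvx}, \eqref{eq.f9cwnj1} and \eqref{eq.tkovapp} of the theorem immediately preceding the corollary. Those identities were derived from Lemma~\ref{lem.ggc97nq}, which — in contrast to Lemma~\ref{lem.xs3wlq8} — carries no $\Delta\ne 0$ hypothesis, so each of them holds for every choice of the integer parameters $a,b,c,d,e,m$. In particular I am free to set $m=b$ with no collateral conditions to check.

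Making the substitution $m=b$ turns the third summand on the right-hand side of each identity, namely $(X_{d-a}X_{e-c}-X_{e-a}X_{d-c})X_{m-b}$, into $(X_{d-a}X_{e-c}-X_{e-a}X_{d-c})X_{0}$. For each of the three sequences in question the initial value is $F_0=0$, $J_0=0$ and $P_0=0$ (read off from the definitions in the introduction), so this summand vanishes identically. What remains is precisely
\[
(X_{d-a}X_{e-b}-X_{e-a}X_{d-b})X_{b-c}=(X_{d-c}X_{e-b}-X_{e-c}X_{d-b})X_{b-a},
\]
with $X\in\{F,J,P\}$, which are the three displayed identities \eqref{eq.lbf5tex}, \eqref{eq.j3e1sjn} and \eqref{eq.hci7jcu}.

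There is essentially no obstacle here: the only things to verify are that $F$, $J$ and $P$ all have vanishing zeroth term, which is immediate, and that the parent identities are genuinely unconditional, which is guaranteed by Lemma~\ref{lem.ggc97nq}. One could equally well bypass the theorem and apply Lemma~\ref{lem.ggc97nq} directly to each of $\{F_m\}$, $\{J_m\}$, $\{P_m\}$ with $m$ replaced by $b$; the bookkeeping is identical. It is worth remarking why the Lucas-type companions $L$, $j$, $Q$ are absent: their zeroth terms equal $2$, $2$ and $2$ respectively, so the analogous specialisation would leave the residual term $2(X_{d-a}X_{e-c}-X_{e-a}X_{d-c})$ rather than collapsing to a clean two-term identity.
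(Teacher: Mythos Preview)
Your proof is correct and follows exactly the paper's own argument: the paper states just before the corollary that ``Setting $m=b$ in identities~\eqref{eq.seuftvx}, \eqref{eq.f9cwnj1} and \eqref{eq.tkovapp} gives the next set of results.'' Your additional remarks about $F_0=J_0=P_0=0$ and why the Lucas-type companions are excluded are apt elaborations of this same specialisation.
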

Using $b=0$, $c=-a$ and $e=a$ in the identities in Corollary~\ref{thm.phyv4vj}, we obtain:
\begin{equation}\label{eq.huqy5k3}
F_{d + a}  + ( - 1)^a F_{d - a}  = F_d L_a\,,
\end{equation}
\begin{equation}\label{eq.qizngjv}
J_{d + a}  + ( - 1)^a 2^a J_{d - a}  = J_d j_a
\end{equation}
and
\begin{equation}\label{eq.fjuoodj}
P_{d + a}  + ( - 1)^a P_{d - a}  = P_d Q_a\,.
\end{equation}

Putting $d=a$ in each case, the identities in Corollary~\ref{thm.phyv4vj} reduce to
\begin{equation}\label{eq.kjtr5i5}
F_{a - c} F_{e - b}  - F_{e - c} F_{a - b}  = ( - 1)^{a - b} F_{e - a} F_{b - c}\,,
\end{equation}
\begin{equation}\label{eq.ht7q6bl}
J_{a - c} J_{e - b}  - J_{e - c} J_{a - b}  = ( - 1)^{a - b} 2^{a-b}J_{e - a} J_{b - c}
\end{equation}
and
\begin{equation}\label{eq.ml4n5lc}
P_{a - c} P_{e - b}  - P_{e - c} P_{a - b}  = ( - 1)^{a - b} P_{e - a} P_{b - c}\,.
\end{equation}
Using $a=e+h$, $b=e-n-k$, $c=e-n$, identities~\eqref{eq.kjtr5i5} --- \eqref{eq.ml4n5lc} can also be written
\begin{equation}\label{eq.dt415os}
F_{n+h} F_{n+k}  - F_{n} F_{n+h+k}  = ( - 1)^{n} F_{h} F_{k}\,,
\end{equation}
\begin{equation}\label{eq.wsgwwty}
J_{n+h} J_{n+k}  - J_{n} J_{n+h+k}  = ( - 1)^{n} 2^nJ_{h} J_{k}
\end{equation}
and
\begin{equation}\label{eq.qqylvxr}
P_{n+h} P_{n+k}  - P_{n} P_{n+h+k}  = ( - 1)^{n} P_{h} P_{k}\,.
\end{equation}
\begin{thm}\label{thm.x3r8tfg}
The following identities hold for all integers $a$, $b$ and $c$:
\begin{equation}
\begin{split}
&( - 1)^{a - b} L_{a - b}^2  + ( - 1)^{b - c} L_{b - c}^2  + ( - 1)^{a - c} L_{a - c}^2\\
&\qquad  = ( - 1)^{a - c} L_{a - b}L_{b - c} L_{a - c}   + 4\,,
\end{split}
\end{equation}
\begin{equation}
\begin{split}
&( - 1)^{a - b} 2^{b - a} j_{a - b}^2  + ( - 1)^{b - c} 2^{c - b} j_{b - c}^2  + ( - 1)^{a - c} 2^{c - a} j_{a - c}^2\\
&\qquad  = ( - 1)^{a - c} 2^{c - a} j_{a - b} j_{b - c}j_{a - c}   + 4
\end{split}
\end{equation}
and
\begin{equation}
\begin{split}
&( - 1)^{a - b} Q_{a - b}^2  + ( - 1)^{b - c} Q_{b - c}^2  + ( - 1)^{a - c} Q_{a - c}^2\\
&\qquad  = ( - 1)^{a - c} Q_{a - b}Q_{b - c} Q_{a - c}   + 4\,.
\end{split}
\end{equation}

\end{thm}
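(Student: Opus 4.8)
The plan is to pass to the symmetric three--parameter form. Writing $u=a-b$, $v=b-c$, $w=a-c$ we have $u+v=w$, and the three asserted identities are precisely the three displays recorded in the introduction (in the Jacobsthal--Lucas case after dividing through by $2^{w}$ and using $v-w=-u$, $u-w=-v$). So it suffices to prove, for all integers $u$, $v$ with $w=u+v$, the identity $(-1)^{u}L_{u}^{2}+(-1)^{v}L_{v}^{2}+(-1)^{w}L_{w}^{2}=(-1)^{w}L_{u}L_{v}L_{w}+4$, its verbatim $Q$-analogue, and $(-1)^{u}2^{v}j_{u}^{2}+(-1)^{v}2^{u}j_{v}^{2}+(-1)^{w}j_{w}^{2}=(-1)^{w}j_{u}j_{v}j_{w}+2^{w+2}$.

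For the Lucas case I would start from the product formula $L_{p}L_{q}=L_{p+q}+(-1)^{q}L_{p-q}$ (identity~\eqref{eq.cysiwix}). Applying it with $p=u$, $q=v$ and multiplying the result by $L_{w}$, and using $u+v=w$, gives $L_{u}L_{v}L_{w}=L_{w}^{2}+(-1)^{v}L_{u-v}L_{w}$. Applying the same formula to $L_{u-v}L_{w}$, noting $(u-v)+w=2u$ and $(u-v)-w=-2v$ together with $L_{-2v}=L_{2v}$, yields $L_{u}L_{v}L_{w}=L_{w}^{2}+(-1)^{v}L_{2u}+(-1)^{v+w}L_{2v}=L_{w}^{2}+(-1)^{v}L_{2u}+(-1)^{u}L_{2v}$ since $v+w\equiv u\pmod 2$. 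Now substitute the duplication formula $L_{2n}=L_{n}^{2}-2(-1)^{n}$ (from~\eqref{eq.qgjcd5f}); the four ``$-2(-1)^{(\cdot)}$'' contributions combine to $-4(-1)^{w}$ because $(-1)^{u+v}=(-1)^{w}$, leaving $L_{u}L_{v}L_{w}=L_{w}^{2}+(-1)^{v}L_{u}^{2}+(-1)^{u}L_{v}^{2}-4(-1)^{w}$. Multiplying by $(-1)^{w}$ and using $v+w\equiv u$, $u+w\equiv v$ gives the claimed Lucas identity.

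The Pell--Lucas case is word-for-word the same computation, since the product formula $Q_{p}Q_{q}=Q_{p+q}+(-1)^{q}Q_{p-q}$ (the $Q$-analogue of~\eqref{eq.cysiwix}, with $Q_{-n}=(-1)^{n}Q_{n}$) and the duplication formula $Q_{2n}=Q_{n}^{2}-2(-1)^{n}$ (from~\eqref{eq.vcrqvxj}) have exactly the same shape. For the Jacobsthal--Lucas case I would run the identical argument while carrying the powers of $2$: from $j_{p}j_{q}=j_{p+q}+(-1)^{q}2^{q}j_{p-q}$ (identity~\eqref{eq.rort88d}) one gets $j_{u}j_{v}j_{w}=j_{w}^{2}+(-1)^{v}2^{v}j_{u-v}j_{w}$; then, using $(u-v)+w=2u$, $(u-v)-w=-2v$, and $j_{-2v}=2^{-2v}j_{2v}$ (so that $2^{v}\cdot 2^{w-2v}=2^{u}$), one gets $j_{u}j_{v}j_{w}=j_{w}^{2}+(-1)^{v}2^{v}j_{2u}+(-1)^{v+w}2^{u}j_{2v}$. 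Substituting the duplication formula $j_{2n}=j_{n}^{2}-2^{n+1}(-1)^{n}$ (from~\eqref{eq.n89h5sa}) and collecting the two resulting ``$2^{w+1}(-1)^{w}$'' terms gives $j_{u}j_{v}j_{w}=j_{w}^{2}+(-1)^{v}2^{v}j_{u}^{2}+(-1)^{u}2^{u}j_{v}^{2}-2^{w+2}(-1)^{w}$, which after multiplication by $(-1)^{w}$ is the three--parameter $j$-identity; dividing by $2^{w}$ and substituting $u=a-b$, $v=b-c$, $w=a-c$ reproduces the statement exactly.

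No step is conceptually hard; the only thing to watch is the bookkeeping of signs, and in the Jacobsthal--Lucas case of the powers of $2$ --- in particular the parity reductions $(-1)^{v+w}=(-1)^{u}$, $(-1)^{u+w}=(-1)^{v}$, the exponent collapse $2^{v}\cdot2^{w-2v}=2^{u}$, and the correct application of the negative-index symmetry relations $L_{-n}=(-1)^{n}L_{n}$, $Q_{-n}=(-1)^{n}Q_{n}$, $j_{-n}=(-1)^{n}2^{-n}j_{n}$ at the points where the index $(u-v)+w=2u$ or $(u-v)-w=-2v$ occurs. (Alternatively, all three identities drop out of a one-line expansion of $X_{u}X_{v}X_{w}$ using the Binet representation $X_{n}=\alpha^{n}+\beta^{n}$ with $\alpha\beta=-1$ for $L$ and $Q$, and $\alpha\beta=-2$ for $j$, which renders the same computation entirely mechanical.)
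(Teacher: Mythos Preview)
Your argument is correct. The reduction to the $u,v,w$ form with $w=u+v$ is just a relabelling, and from there your repeated use of the product law $X_pX_q=X_{p+q}+(-1)^q\kappa^{q}X_{p-q}$ (with $\kappa=1$ for $L,Q$ and $\kappa=2$ for $j$) together with the duplication formula $X_{2n}=X_n^{2}-2\kappa^{n}(-1)^n$ goes through cleanly; I checked the sign and power-of-$2$ bookkeeping in all three cases and it is right. One small cosmetic slip: when you say ``the four $-2(-1)^{(\cdot)}$ contributions'', there are in fact only two such terms (one from $L_{2u}$, one from $L_{2v}$), each of which becomes $-2(-1)^{w}$ after absorbing its outer sign; the conclusion $-4(-1)^w$ is unaffected.

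This is, however, a genuinely different route from the paper's. The paper proves the theorem in one stroke by setting $m=c$ in Lemma~\ref{lem.r460krb} (the $d=a$, $e=b$ specialisation of Lemma~\ref{lem.ggc97nq}) and then taking $X=L,j,Q$ in turn; the negative-index symmetries $X_{-n}=(-1)^n\kappa^{-n}X_n$ and the value $X_0=2$ immediately collapse that linear relation into the stated quadratic--cubic identity. Your approach instead chains together two consequences of that same machinery (the product law~\eqref{eq.cysiwix}/\eqref{eq.rort88d} and the duplication law~\eqref{eq.qgjcd5f}--\eqref{eq.vcrqvxj}) that were already derived earlier in the section, so there is no circularity. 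The paper's method is shorter and shows the identity as a direct shadow of the universal three-term relation for any second-order sequence; your method is more hands-on and makes the mechanism (product law iterated, then duplication) transparent, and the Binet alternative you mention at the end would make all three cases a single two-line computation.
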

\begin{proof}
Set $m=c$ in Lemma~\ref{lem.r460krb} and use $X=L$, $X=j$ and $X=Q$, in turn.
\end{proof}
Note that, for integers $u$, $v$, $w$ such that $u+v=w$, the identities in Theorem~\ref{thm.x3r8tfg} can also be written
\begin{equation}
( - 1)^u L_u^2  + ( - 1)^v L_v^2  + ( - 1)^w L_w^2  = ( - 1)^w L_u L_v L_w  + 4\,,
\end{equation}
\begin{equation}
( - 1)^u 2^v j_u^2  + ( - 1)^v 2^u j_v^2  + ( - 1)^w j_w^2  = ( - 1)^w j_u j_v j_w  + 2^{w + 2}
\end{equation}
and
\begin{equation}
( - 1)^u Q_u^2  + ( - 1)^v Q_v^2  + ( - 1)^w Q_w^2  = ( - 1)^w Q_u Q_v Q_w  + 4\,.
\end{equation}
\subsection{Weighted sums}
Choosing an appropriate $(X,Y)$ pair, in each case, from the set $\{F,L,J,j,P,Q\}$ and using it in Lemma~\ref{lem.mbs9sdf} we have the next set of results.
\begin{thm}
The following identities hold for any integer $k$ and arbitrary integers $a$, $b$, $c$, $d$, $e$, $m$ for which the denominator does not vanish:
\begin{equation}
\begin{split}
&\sum\limits_{r = 0}^k {\left( {\frac{{F_{d - a} L_{e - b}  - F_{e - a} L_{d - b}}}{{F_{d - c} L_{e - b}  - F_{e - c} L_{d - b}}}} \right)^r L_{m - k(a-c)  - b + c  + (a-c) r} }\\
&\qquad = \left( {\frac{{F_{d - a} L_{e - b}  - F_{e - a} L_{d - b}}}{{F_{d - a} F_{e - c}  - F_{e - a} F_{d - c}}}} \right)\left( {\frac{{F_{d - a} L_{e - b}  - F_{e - a} L_{d - b}}}{{F_{d - c} L_{e - b}  - F_{e - c} L_{d - b}}}} \right)^k F_m\\
&\qquad\qquad\qquad- \left( {\frac{{F_{d - c} L_{e - b}  - F_{e - c} L_{d - b}}}{{F_{d - a} F_{e - c}  - F_{e - a} F_{d - c}}}} \right)F_{m - (k + 1)(a-c) }\,,
\end{split}
\end{equation}
\begin{equation}
\begin{split}
&\sum\limits_{r = 0}^k {\left( {\frac{{L_{d - a} F_{e - b}  - L_{e - a} F_{d - b}}}{{L_{d - c} F_{e - b}  - L_{e - c} F_{d - b}}}} \right)^r F_{m - k(a-c)  - b + c  + (a-c) r} }\\
&\qquad = \left( {\frac{{L_{d - a} F_{e - b}  - L_{e - a} F_{d - b}}}{{L_{d - a} L_{e - c}  - L_{e - a} L_{d - c}}}} \right)\left( {\frac{{L_{d - a} F_{e - b}  - L_{e - a} F_{d - b}}}{{L_{d - c} F_{e - b}  - L_{e - c} F_{d - b}}}} \right)^k L_m\\
&\qquad\qquad\qquad- \left( {\frac{{L_{d - c} F_{e - b}  - L_{e - c} F_{d - b}}}{{L_{d - a} L_{e - c}  - L_{e - a} L_{d - c}}}} \right)L_{m - (k + 1)(a-c) }\,,
\end{split}
\end{equation}
\begin{equation}
\begin{split}
&\sum\limits_{r = 0}^k {\left( {\frac{{J_{d - a} j_{e - b}  - J_{e - a} j_{d - b}}}{{J_{d - c} j_{e - b}  - J_{e - c} j_{d - b}}}} \right)^r j_{m - k(a-c)  - b + c  + (a-c) r} }\\
&\qquad = \left( {\frac{{J_{d - a} j_{e - b}  - J_{e - a} j_{d - b}}}{{J_{d - a} J_{e - c}  - J_{e - a} J_{d - c}}}} \right)\left( {\frac{{J_{d - a} j_{e - b}  - J_{e - a} j_{d - b}}}{{J_{d - c} j_{e - b}  - J_{e - c} j_{d - b}}}} \right)^k J_m\\
&\qquad\qquad\qquad- \left( {\frac{{J_{d - c} j_{e - b}  - J_{e - c} j_{d - b}}}{{J_{d - a} J_{e - c}  - J_{e - a} J_{d - c}}}} \right)J_{m - (k + 1)(a-c) }\,,
\end{split}
\end{equation}
\begin{equation}
\begin{split}
&\sum\limits_{r = 0}^k {\left( {\frac{{j_{d - a} J_{e - b}  - j_{e - a} J_{d - b}}}{{j_{d - c} J_{e - b}  - j_{e - c} J_{d - b}}}} \right)^r J_{m - k(a-c)  - b + c  + (a-c) r} }\\
&\qquad = \left( {\frac{{j_{d - a} J_{e - b}  - j_{e - a} J_{d - b}}}{{j_{d - a} j_{e - c}  - j_{e - a} j_{d - c}}}} \right)\left( {\frac{{j_{d - a} J_{e - b}  - j_{e - a} J_{d - b}}}{{j_{d - c} J_{e - b}  - j_{e - c} J_{d - b}}}} \right)^k j_m\\
&\qquad\qquad\qquad- \left( {\frac{{j_{d - c} J_{e - b}  - j_{e - c} J_{d - b}}}{{j_{d - a} j_{e - c}  - j_{e - a} j_{d - c}}}} \right)j_{m - (k + 1)(a-c) }\,,
\end{split}
\end{equation}
\begin{equation}
\begin{split}
&\sum\limits_{r = 0}^k {\left( {\frac{{P_{d - a} Q_{e - b}  - P_{e - a} Q_{d - b}}}{{P_{d - c} Q_{e - b}  - P_{e - c} Q_{d - b}}}} \right)^r Q_{m - k(a-c)  - b + c  + (a-c) r} }\\
&\qquad = \left( {\frac{{P_{d - a} Q_{e - b}  - P_{e - a} Q_{d - b}}}{{P_{d - a} P_{e - c}  - P_{e - a} P_{d - c}}}} \right)\left( {\frac{{P_{d - a} Q_{e - b}  - P_{e - a} Q_{d - b}}}{{P_{d - c} Q_{e - b}  - P_{e - c} Q_{d - b}}}} \right)^k P_m\\
&\qquad\qquad\qquad- \left( {\frac{{P_{d - c} Q_{e - b}  - P_{e - c} Q_{d - b}}}{{P_{d - a} P_{e - c}  - P_{e - a} P_{d - c}}}} \right)P_{m - (k + 1)(a-c) }
\end{split}
\end{equation}
and
\begin{equation}
\begin{split}
&\sum\limits_{r = 0}^k {\left( {\frac{{Q_{d - a} P_{e - b}  - Q_{e - a} P_{d - b}}}{{Q_{d - c} P_{e - b}  - Q_{e - c} P_{d - b}}}} \right)^r P_{m - k(a-c)  - b + c  + (a-c) r} }\\
&\qquad = \left( {\frac{{Q_{d - a} P_{e - b}  - Q_{e - a} P_{d - b}}}{{Q_{d - a} Q_{e - c}  - Q_{e - a} Q_{d - c}}}} \right)\left( {\frac{{Q_{d - a} P_{e - b}  - Q_{e - a} P_{d - b}}}{{Q_{d - c} P_{e - b}  - Q_{e - c} P_{d - b}}}} \right)^k Q_m\\
&\qquad\qquad\qquad- \left( {\frac{{Q_{d - c} P_{e - b}  - Q_{e - c} P_{d - b}}}{{Q_{d - a} Q_{e - c}  - Q_{e - a} Q_{d - c}}}} \right)Q_{m - (k + 1)(a-c) }\,.
\end{split}
\end{equation}

\end{thm}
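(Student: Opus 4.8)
The plan is to read off all six identities as direct specializations of Lemma~\ref{lem.mbs9sdf}. For each display one takes $\{X_m\}$ and $\{Y_m\}$ to be the ordered pair from $\{F,L,J,j,P,Q\}$ occurring in it --- $(F,L)$ and $(L,F)$ for the first two, $(J,j)$ and $(j,J)$ for the next two, $(P,Q)$ and $(Q,P)$ for the last two --- and checks that in every case the two sequences share a common recurrence: $Z_n=Z_{n-1}+Z_{n-2}$ for the Fibonacci/Lucas pair, $Z_n=Z_{n-1}+2Z_{n-2}$ for the Jacobsthal pair, and $Z_n=2Z_{n-1}+Z_{n-2}$ for the Pell pair. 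This is precisely the hypothesis of Lemma~\ref{lem.mbs9sdf} that $\{X_m\}$ and $\{Y_m\}$ possess the same recurrence relation, so the lemma applies.

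With the pair fixed there is nothing to compute: the conclusion of Lemma~\ref{lem.mbs9sdf} is the desired display once $X,Y$ are relabelled. In the lemma's notation $\Delta_{xy}=X_{d-a}Y_{e-b}-X_{e-a}Y_{d-b}$ is the numerator that recurs throughout, while $\Delta_1=X_{d-c}Y_{e-b}-X_{e-c}Y_{d-b}$ and $\Delta_2=X_{d-a}X_{e-c}-X_{e-a}X_{d-c}$ are the two expressions that appear as denominators; thus ``the denominator does not vanish'' is exactly the requirement $\Delta_1\ne0$, $\Delta_2\ne0$. The proof is therefore essentially the list of the six substitutions.

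The one point that needs care is the extra hypothesis $\Delta_{xy}\ne0$ carried by Lemma~\ref{lem.mbs9sdf}, which is not one of the ``denominator'' conditions stated in the theorem; I would remove it as the paper removed the analogous condition from the three-term identities. The relation underlying Lemma~\ref{lem.mbs9sdf} is $\Delta_{xy}X_{m-c}=\Delta_1X_{m-a}+\Delta_2Y_{m-b}$, namely Lemma~\ref{lem.xs3wlq8}. For the pairs $(F,L)$, $(J,j)$, $(P,Q)$ the companion sequences $L$, $j$, $Q$ never vanish and each pair has at most three members in common, so Lemma~\ref{lem.k8oc7py} upgrades this to an identity valid for all integers; feeding the unrestricted identity into Lemmata~1 and~2 of~\cite{adegoke18} then yields the weighted sum with no $\Delta_{xy}$ restriction, only the denominator conditions. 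For the reversed pairs $(L,F)$, $(j,J)$, $(Q,P)$ the companion sequence vanishes at index $0$, so I would instead argue that $\Delta_{xy}=0$ is incompatible with $\Delta_1\ne0$ and $\Delta_2\ne0$: writing $p=(X_{d-a},X_{e-a})$, $q=(X_{d-c},X_{e-c})$, $s=(Y_{d-b},Y_{e-b})$ one has $\Delta_{xy}=p\wedge s$, $\Delta_1=q\wedge s$, $\Delta_2=p\wedge q$, and $\Delta_{xy}=0$ together with $\Delta_2\ne0$ forces $s$ to be a nonzero multiple of $p$, a proportionality that a short check rules out using the distinct exponential growth rates of these sequences (and noting $L$, $j$, $Q$ are nonzero, so $p\ne0$).

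The main obstacle is thus not algebraic --- the algebra is vacuous --- but this bookkeeping around $\Delta_{xy}$. If one is content to list $\Delta_{xy}\ne0$ among the non-vanishing requirements, the proof collapses to the single sentence: apply Lemma~\ref{lem.mbs9sdf} with $(X,Y)$ taken in turn to be $(F,L)$, $(L,F)$, $(J,j)$, $(j,J)$, $(P,Q)$, $(Q,P)$.
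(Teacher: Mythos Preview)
Your proposal is correct and matches the paper's approach exactly: the paper's entire proof is the single sentence preceding the theorem, namely to apply Lemma~\ref{lem.mbs9sdf} with $(X,Y)$ taken in turn as the six ordered pairs $(F,L)$, $(L,F)$, $(J,j)$, $(j,J)$, $(P,Q)$, $(Q,P)$. Your extended discussion of the $\Delta_{xy}\ne0$ hypothesis actually goes beyond what the paper does --- the paper simply transcribes the lemma's conclusion without reconciling that extra hypothesis against the theorem's weaker ``denominator does not vanish'' phrasing.
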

Using $X=F$, $X=L$, $X=J$, $X=j$, $X=P$, $X=Q$, in turn, in Lemma~\ref{lem.ubk4var} gives the next results.
\begin{thm}
The following identities hold for any integer $k$ and arbitrary integers $a$, $b$, $c$, $d$, $e$, $m$ for which the denominator does not vanish:
\begin{equation}
\begin{split}
&\sum\limits_{r = 0}^k {\left( {\frac{{F_{d - a} F_{e - b}  - F_{e - a} F_{d - b}}}{{F_{d - c} F_{e - b}  - F_{e - c} F_{d - b}}}} \right)^r F_{m - k(a-c)  - b + c  + (a-c) r} }\\
&\qquad = \left( {\frac{{F_{d - a} F_{e - b}  - F_{e - a} F_{d - b}}}{{F_{d - a} F_{e - c}  - F_{e - a} F_{d - c}}}} \right)\left( {\frac{{F_{d - a} F_{e - b}  - F_{e - a} F_{d - b}}}{{F_{d - c} F_{e - b}  - F_{e - c} F_{d - b}}}} \right)^k F_m\\
&\quad\qquad- \left( {\frac{{F_{d - c} F_{e - b}  - F_{e - c} F_{d - b}}}{{F_{d - a} F_{e - c}  - F_{e - a} F_{d - c}}}} \right)F_{m - (k + 1)(a-c) }\,,
\end{split}
\end{equation}

\begin{equation}
\begin{split}
&\sum\limits_{r = 0}^k {\left( {\frac{{F_{d - a} F_{e - b}  - F_{e - a} F_{d - b}}}{{F_{d - a} F_{e - c}  - F_{e - a} F_{d - c}}}} \right)^r F_{m - k(b-c)  - a + c  + (b-c) r} }\\
&\qquad = \left( {\frac{{F_{d - a} F_{e - b}  - F_{e - a} F_{d - b}}}{{F_{d - c} F_{e - b}  - F_{e - c} F_{d - b}}}} \right)\left( {\frac{{F_{d - a} F_{e - b}  - F_{e - a} F_{d - b}}}{{F_{d - a} F_{e - c}  - F_{e - a} F_{d - c}}}} \right)^k F_m\\
&\quad\qquad- \left( {\frac{{F_{d - a} F_{e - c}  - F_{e - a} F_{d - c}}}{{F_{d - c} F_{e - b}  - F_{e - c} F_{d - b}}}} \right)F_{m - (k + 1)(b-c) }\,,
\end{split}
\end{equation}
\begin{equation}
\begin{split}
&\sum\limits_{r = 0}^k {\left( {\frac{{F_{e - a} F_{d - c}-F_{d - a} F_{e - c}}}{{F_{d - c} F_{e - b}  - F_{e - c} F_{d - b}}}} \right)^r F_{m - k(a-b)  + b - c  + (a-b) r} }\\
&\qquad = \left( {\frac{{F_{d - a} F_{e - c}  - F_{e - a} F_{d - c}}}{{F_{d - a} F_{e - b}  - F_{e - a} F_{d - b}}}} \right)\left( {\frac{{F_{e - a} F_{d - c}-F_{d - a} F_{e - c}}}{{F_{d - c} F_{e - b}  - F_{e - c} F_{d - b}}}} \right)^k F_m\\
&\quad\qquad+ \left( {\frac{F_{d - c} F_{e - b}  - F_{e - c} F_{d - b}}{F_{d - a} F_{e - b}  - F_{e - a} F_{d - b}}} \right)F_{m - (k + 1)(a-b) }\,,
\end{split}
\end{equation}
\begin{equation}
\begin{split}
&\sum\limits_{r = 0}^k {\left( {\frac{{L_{d - a} L_{e - b}  - L_{e - a} L_{d - b}}}{{L_{d - c} L_{e - b}  - L_{e - c} L_{d - b}}}} \right)^r L_{m - k(a-c)  - b + c  + (a-c) r} }\\
&\qquad = \left( {\frac{{L_{d - a} L_{e - b}  - L_{e - a} L_{d - b}}}{{L_{d - a} L_{e - c}  - L_{e - a} L_{d - c}}}} \right)\left( {\frac{{L_{d - a} L_{e - b}  - L_{e - a} L_{d - b}}}{{L_{d - c} L_{e - b}  - L_{e - c} L_{d - b}}}} \right)^k L_m\\
&\quad\qquad- \left( {\frac{{L_{d - c} L_{e - b}  - L_{e - c} L_{d - b}}}{{L_{d - a} L_{e - c}  - L_{e - a} L_{d - c}}}} \right)L_{m - (k + 1)(a-c) }\,,
\end{split}
\end{equation}

\begin{equation}
\begin{split}
&\sum\limits_{r = 0}^k {\left( {\frac{{L_{d - a} L_{e - b}  - L_{e - a} L_{d - b}}}{{L_{d - a} L_{e - c}  - L_{e - a} L_{d - c}}}} \right)^r L_{m - k(b-c)  - a + c  + (b-c) r} }\\
&\qquad = \left( {\frac{{L_{d - a} L_{e - b}  - L_{e - a} L_{d - b}}}{{L_{d - c} L_{e - b}  - L_{e - c} L_{d - b}}}} \right)\left( {\frac{{L_{d - a} L_{e - b}  - L_{e - a} L_{d - b}}}{{L_{d - a} L_{e - c}  - L_{e - a} L_{d - c}}}} \right)^k L_m\\
&\quad\qquad- \left( {\frac{{L_{d - a} L_{e - c}  - L_{e - a} L_{d - c}}}{{L_{d - c} L_{e - b}  - L_{e - c} L_{d - b}}}} \right)L_{m - (k + 1)(b-c) }\,,
\end{split}
\end{equation}
\begin{equation}
\begin{split}
&\sum\limits_{r = 0}^k {\left( {\frac{{L_{e - a} L_{d - c}-L_{d - a} L_{e - c}}}{{L_{d - c} L_{e - b}  - L_{e - c} L_{d - b}}}} \right)^r L_{m - k(a-b)  + b - c  + (a-b) r} }\\
&\qquad = \left( {\frac{{L_{d - a} L_{e - c}  - L_{e - a} L_{d - c}}}{{L_{d - a} L_{e - b}  - L_{e - a} L_{d - b}}}} \right)\left( {\frac{{L_{e - a} L_{d - c}-L_{d - a} L_{e - c}}}{{L_{d - c} L_{e - b}  - L_{e - c} L_{d - b}}}} \right)^k L_m\\
&\quad\qquad+ \left( {\frac{L_{d - c} L_{e - b}  - L_{e - c} L_{d - b}}{L_{d - a} L_{e - b}  - L_{e - a} L_{d - b}}} \right)L_{m - (k + 1)(a-b) }\,,
\end{split}
\end{equation}
\begin{equation}
\begin{split}
&\sum\limits_{r = 0}^k {\left( {\frac{{J_{d - a} J_{e - b}  - J_{e - a} J_{d - b}}}{{J_{d - c} J_{e - b}  - J_{e - c} J_{d - b}}}} \right)^r J_{m - k(a-c)  - b + c  + (a-c) r} }\\
&\qquad = \left( {\frac{{J_{d - a} J_{e - b}  - J_{e - a} J_{d - b}}}{{J_{d - a} J_{e - c}  - J_{e - a} J_{d - c}}}} \right)\left( {\frac{{J_{d - a} J_{e - b}  - J_{e - a} J_{d - b}}}{{J_{d - c} J_{e - b}  - J_{e - c} J_{d - b}}}} \right)^k J_m\\
&\quad\qquad- \left( {\frac{{J_{d - c} J_{e - b}  - J_{e - c} J_{d - b}}}{{J_{d - a} J_{e - c}  - J_{e - a} J_{d - c}}}} \right)J_{m - (k + 1)(a-c) }\,,
\end{split}
\end{equation}

\begin{equation}
\begin{split}
&\sum\limits_{r = 0}^k {\left( {\frac{{J_{d - a} J_{e - b}  - J_{e - a} J_{d - b}}}{{J_{d - a} J_{e - c}  - J_{e - a} J_{d - c}}}} \right)^r J_{m - k(b-c)  - a + c  + (b-c) r} }\\
&\qquad = \left( {\frac{{J_{d - a} J_{e - b}  - J_{e - a} J_{d - b}}}{{J_{d - c} J_{e - b}  - J_{e - c} J_{d - b}}}} \right)\left( {\frac{{J_{d - a} J_{e - b}  - J_{e - a} J_{d - b}}}{{J_{d - a} J_{e - c}  - J_{e - a} J_{d - c}}}} \right)^k J_m\\
&\quad\qquad- \left( {\frac{{J_{d - a} J_{e - c}  - J_{e - a} J_{d - c}}}{{J_{d - c} J_{e - b}  - J_{e - c} J_{d - b}}}} \right)J_{m - (k + 1)(b-c) }\,,
\end{split}
\end{equation}
\begin{equation}
\begin{split}
&\sum\limits_{r = 0}^k {\left( {\frac{{J_{e - a} J_{d - c}-J_{d - a} J_{e - c}}}{{J_{d - c} J_{e - b}  - J_{e - c} J_{d - b}}}} \right)^r J_{m - k(a-b)  + b - c  + (a-b) r} }\\
&\qquad = \left( {\frac{{J_{d - a} J_{e - c}  - J_{e - a} J_{d - c}}}{{J_{d - a} J_{e - b}  - J_{e - a} J_{d - b}}}} \right)\left( {\frac{{J_{e - a} J_{d - c}-J_{d - a} J_{e - c}}}{{J_{d - c} J_{e - b}  - J_{e - c} J_{d - b}}}} \right)^k J_m\\
&\quad\qquad+ \left( {\frac{J_{d - c} J_{e - b}  - J_{e - c} J_{d - b}}{J_{d - a} J_{e - b}  - J_{e - a} J_{d - b}}} \right)J_{m - (k + 1)(a-b) }\,,
\end{split}
\end{equation}
\begin{equation}
\begin{split}
&\sum\limits_{r = 0}^k {\left( {\frac{{j_{d - a} j_{e - b}  - j_{e - a} j_{d - b}}}{{j_{d - c} j_{e - b}  - j_{e - c} j_{d - b}}}} \right)^r j_{m - k(a-c)  - b + c  + (a-c) r} }\\
&\qquad = \left( {\frac{{j_{d - a} j_{e - b}  - j_{e - a} j_{d - b}}}{{j_{d - a} j_{e - c}  - j_{e - a} j_{d - c}}}} \right)\left( {\frac{{j_{d - a} j_{e - b}  - j_{e - a} j_{d - b}}}{{j_{d - c} j_{e - b}  - j_{e - c} j_{d - b}}}} \right)^k j_m\\
&\quad\qquad- \left( {\frac{{j_{d - c} j_{e - b}  - j_{e - c} j_{d - b}}}{{j_{d - a} j_{e - c}  - j_{e - a} j_{d - c}}}} \right)j_{m - (k + 1)(a-c) }\,,
\end{split}
\end{equation}

\begin{equation}
\begin{split}
&\sum\limits_{r = 0}^k {\left( {\frac{{j_{d - a} j_{e - b}  - j_{e - a} j_{d - b}}}{{j_{d - a} j_{e - c}  - j_{e - a} j_{d - c}}}} \right)^r j_{m - k(b-c)  - a + c  + (b-c) r} }\\
&\qquad = \left( {\frac{{j_{d - a} j_{e - b}  - j_{e - a} j_{d - b}}}{{j_{d - c} j_{e - b}  - j_{e - c} j_{d - b}}}} \right)\left( {\frac{{j_{d - a} j_{e - b}  - j_{e - a} j_{d - b}}}{{j_{d - a} j_{e - c}  - j_{e - a} j_{d - c}}}} \right)^k j_m\\
&\quad\qquad- \left( {\frac{{j_{d - a} j_{e - c}  - j_{e - a} j_{d - c}}}{{j_{d - c} j_{e - b}  - j_{e - c} j_{d - b}}}} \right)j_{m - (k + 1)(b-c) }\,,
\end{split}
\end{equation}
\begin{equation}
\begin{split}
&\sum\limits_{r = 0}^k {\left( {\frac{{j_{e - a} j_{d - c}-j_{d - a} j_{e - c}}}{{j_{d - c} j_{e - b}  - j_{e - c} j_{d - b}}}} \right)^r j_{m - k(a-b)  + b - c  + (a-b) r} }\\
&\qquad = \left( {\frac{{j_{d - a} j_{e - c}  - j_{e - a} j_{d - c}}}{{j_{d - a} j_{e - b}  - j_{e - a} j_{d - b}}}} \right)\left( {\frac{{j_{e - a} j_{d - c}-j_{d - a} j_{e - c}}}{{j_{d - c} j_{e - b}  - j_{e - c} j_{d - b}}}} \right)^k j_m\\
&\quad\qquad+ \left( {\frac{j_{d - c} j_{e - b}  - j_{e - c} j_{d - b}}{j_{d - a} j_{e - b}  - j_{e - a} j_{d - b}}} \right)j_{m - (k + 1)(a-b) }\,,
\end{split}
\end{equation}
\begin{equation}
\begin{split}
&\sum\limits_{r = 0}^k {\left( {\frac{{P_{d - a} P_{e - b}  - P_{e - a} P_{d - b}}}{{P_{d - c} P_{e - b}  - P_{e - c} P_{d - b}}}} \right)^r P_{m - k(a-c)  - b + c  + (a-c) r} }\\
&\qquad = \left( {\frac{{P_{d - a} P_{e - b}  - P_{e - a} P_{d - b}}}{{P_{d - a} P_{e - c}  - P_{e - a} P_{d - c}}}} \right)\left( {\frac{{P_{d - a} P_{e - b}  - P_{e - a} P_{d - b}}}{{P_{d - c} P_{e - b}  - P_{e - c} P_{d - b}}}} \right)^k P_m\\
&\quad\qquad- \left( {\frac{{P_{d - c} P_{e - b}  - P_{e - c} P_{d - b}}}{{P_{d - a} P_{e - c}  - P_{e - a} P_{d - c}}}} \right)P_{m - (k + 1)(a-c) }\,,
\end{split}
\end{equation}

\begin{equation}
\begin{split}
&\sum\limits_{r = 0}^k {\left( {\frac{{P_{d - a} P_{e - b}  - P_{e - a} P_{d - b}}}{{P_{d - a} P_{e - c}  - P_{e - a} P_{d - c}}}} \right)^r P_{m - k(b-c)  - a + c  + (b-c) r} }\\
&\qquad = \left( {\frac{{P_{d - a} P_{e - b}  - P_{e - a} P_{d - b}}}{{P_{d - c} P_{e - b}  - P_{e - c} P_{d - b}}}} \right)\left( {\frac{{P_{d - a} P_{e - b}  - P_{e - a} P_{d - b}}}{{P_{d - a} P_{e - c}  - P_{e - a} P_{d - c}}}} \right)^k P_m\\
&\quad\qquad- \left( {\frac{{P_{d - a} P_{e - c}  - P_{e - a} P_{d - c}}}{{P_{d - c} P_{e - b}  - P_{e - c} P_{d - b}}}} \right)P_{m - (k + 1)(b-c) }\,,
\end{split}
\end{equation}
\begin{equation}
\begin{split}
&\sum\limits_{r = 0}^k {\left( {\frac{{P_{e - a} P_{d - c}-P_{d - a} P_{e - c}}}{{P_{d - c} P_{e - b}  - P_{e - c} P_{d - b}}}} \right)^r P_{m - k(a-b)  + b - c  + (a-b) r} }\\
&\qquad = \left( {\frac{{P_{d - a} P_{e - c}  - P_{e - a} P_{d - c}}}{{P_{d - a} P_{e - b}  - P_{e - a} P_{d - b}}}} \right)\left( {\frac{{P_{e - a} P_{d - c}-P_{d - a} P_{e - c}}}{{P_{d - c} P_{e - b}  - P_{e - c} P_{d - b}}}} \right)^k P_m\\
&\quad\qquad+ \left( {\frac{P_{d - c} P_{e - b}  - P_{e - c} P_{d - b}}{P_{d - a} P_{e - b}  - P_{e - a} P_{d - b}}} \right)P_{m - (k + 1)(a-b) }\,,
\end{split}
\end{equation}
\begin{equation}
\begin{split}
&\sum\limits_{r = 0}^k {\left( {\frac{{Q_{d - a} Q_{e - b}  - Q_{e - a} Q_{d - b}}}{{Q_{d - c} Q_{e - b}  - Q_{e - c} Q_{d - b}}}} \right)^r Q_{m - k(a-c)  - b + c  + (a-c) r} }\\
&\qquad = \left( {\frac{{Q_{d - a} Q_{e - b}  - Q_{e - a} Q_{d - b}}}{{Q_{d - a} Q_{e - c}  - Q_{e - a} Q_{d - c}}}} \right)\left( {\frac{{Q_{d - a} Q_{e - b}  - Q_{e - a} Q_{d - b}}}{{Q_{d - c} Q_{e - b}  - Q_{e - c} Q_{d - b}}}} \right)^k Q_m\\
&\quad\qquad- \left( {\frac{{Q_{d - c} Q_{e - b}  - Q_{e - c} Q_{d - b}}}{{Q_{d - a} Q_{e - c}  - Q_{e - a} Q_{d - c}}}} \right)Q_{m - (k + 1)(a-c) }\,,
\end{split}
\end{equation}

\begin{equation}
\begin{split}
&\sum\limits_{r = 0}^k {\left( {\frac{{Q_{d - a} Q_{e - b}  - Q_{e - a} Q_{d - b}}}{{Q_{d - a} Q_{e - c}  - Q_{e - a} Q_{d - c}}}} \right)^r Q_{m - k(b-c)  - a + c  + (b-c) r} }\\
&\qquad = \left( {\frac{{Q_{d - a} Q_{e - b}  - Q_{e - a} Q_{d - b}}}{{Q_{d - c} Q_{e - b}  - Q_{e - c} Q_{d - b}}}} \right)\left( {\frac{{Q_{d - a} Q_{e - b}  - Q_{e - a} Q_{d - b}}}{{Q_{d - a} Q_{e - c}  - Q_{e - a} Q_{d - c}}}} \right)^k Q_m\\
&\quad\qquad- \left( {\frac{{Q_{d - a} Q_{e - c}  - Q_{e - a} Q_{d - c}}}{{Q_{d - c} Q_{e - b}  - Q_{e - c} Q_{d - b}}}} \right)Q_{m - (k + 1)(b-c) }
\end{split}
\end{equation}
and
\begin{equation}
\begin{split}
&\sum\limits_{r = 0}^k {\left( {\frac{{Q_{e - a} Q_{d - c}-Q_{d - a} Q_{e - c}}}{{Q_{d - c} Q_{e - b}  - Q_{e - c} Q_{d - b}}}} \right)^r Q_{m - k(a-b)  + b - c  + (a-b) r} }\\
&\qquad = \left( {\frac{{Q_{d - a} Q_{e - c}  - Q_{e - a} Q_{d - c}}}{{Q_{d - a} Q_{e - b}  - Q_{e - a} Q_{d - b}}}} \right)\left( {\frac{{Q_{e - a} Q_{d - c}-Q_{d - a} Q_{e - c}}}{{Q_{d - c} Q_{e - b}  - Q_{e - c} Q_{d - b}}}} \right)^k Q_m\\
&\quad\qquad+ \left( {\frac{Q_{d - c} Q_{e - b}  - Q_{e - c} Q_{d - b}}{Q_{d - a} Q_{e - b}  - Q_{e - a} Q_{d - b}}} \right)Q_{m - (k + 1)(a-b) }\,.
\end{split}
\end{equation}

\end{thm}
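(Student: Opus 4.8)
The plan is to observe that the theorem is nothing more than a specialization of Lemma~\ref{lem.ubk4var}, so the work consists of checking hypotheses and transcribing indices rather than of any new computation. Lemma~\ref{lem.ubk4var} asserts the three weighted-sum identities for \emph{any} homogeneous second order recurrence sequence $\{X_m\}$ with constant coefficients, valid for arbitrary integers $a,b,c,d,e,m$ and any integer $k$ (with the summation convention of the introduction when $k<0$), subject only to $\Delta_1=X_{d-c}X_{e-b}-X_{e-c}X_{d-b}\ne 0$ and $\Delta_2=X_{d-a}X_{e-c}-X_{e-a}X_{d-c}\ne 0$.

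First I would note that each of the six sequences $F$, $L$, $J$, $j$, $P$, $Q$ is, by its defining recurrence in the introduction, a homogeneous second order recurrence sequence with constant coefficients: $F$ and $L$ obey $x_n=x_{n-1}+x_{n-2}$, $J$ and $j$ obey $x_n=x_{n-1}+2x_{n-2}$, and $P$ and $Q$ obey $x_n=2x_{n-1}+x_{n-2}$. Since Lemma~\ref{lem.ubk4var} involves only a single sequence $\{X_m\}$, the fact that the Lucas-type companions share the recurrence of their partners plays no role here; each of the six qualifies on its own. Then I would substitute $X=F$, then $X=L$, then $X=J$, then $X=j$, then $X=P$, then $X=Q$ into each of the three identities of Lemma~\ref{lem.ubk4var}, producing the eighteen displayed identities. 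The requirements $\Delta_1\ne 0$, $\Delta_2\ne 0$ (and, in the third identity of each triple, the non-vanishing of the denominator $X_{d-a}X_{e-b}-X_{e-a}X_{d-b}$) are exactly what the theorem summarizes as ``for which the denominator does not vanish,'' so the stated hypotheses carry over verbatim.

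The only part demanding care is bookkeeping: one must confirm that in each of the eighteen equations the subscript shifts $m-k(a-c)-b+c+(a-c)r$, $m-k(b-c)-a+c+(b-c)r$, $m-k(a-b)+b-c+(a-b)r$ and $m-(k+1)(a-c)$, $m-(k+1)(b-c)$, $m-(k+1)(a-b)$, together with the three ordered arrangements of the ratios $\Delta_2/\Delta_1$, $\Delta_1/\Delta_2$, $\Delta_{xx}/\Delta_1$, \emph{etc.}, and the sign placements inside each numerator, match the corresponding template in Lemma~\ref{lem.ubk4var} literally after the replacement $X\mapsto F,L,J,j,P,Q$. I expect this pattern-matching to be the main (and essentially only) obstacle; in particular nothing sequence-specific—no Binet form, no explicit initial values—enters the argument. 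If one wished, the conclusions could afterward be rewritten in more explicit form for individual sequences by evaluating products such as $F_{d-a}F_{e-b}-F_{e-a}F_{d-b}$ via the index-reduction identities established earlier in this section, but that is a simplification of the output, not a step in the proof.
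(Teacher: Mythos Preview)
Your proposal is correct and matches the paper's own approach exactly: the paper simply states that the theorem follows by taking $X=F$, $X=L$, $X=J$, $X=j$, $X=P$, $X=Q$ in turn in Lemma~\ref{lem.ubk4var}, with no further argument. Your additional remarks about verifying that each sequence satisfies a homogeneous second-order linear recurrence and about matching the non-vanishing-denominator hypotheses to $\Delta_1,\Delta_2\ne 0$ are accurate and, if anything, more explicit than what the paper writes.
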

\subsection{Weighted binomial sums}
Using $X=F$, $X=L$, $X=J$, $X=j$, $X=P$, $X=Q$, in turn, in Lemma~\ref{lem.fg6x899} gives the next results.
\begin{thm}\label{thm.d7ll8n8}
The following identities hold for nonnegative integer $k$ and arbitrary integers $a$, $b$, $c$, $d$, $e$, $m$ for which the denominator does not vanish:
\begin{equation}
\begin{split}
&\sum\limits_{r = 0}^k {\binom kr\left( {\frac{{F_{d - c} F_{e - b}  - F_{e - c} F_{d - b}}}{{F_{d - a} F_{e - c}  - F_{e - a} F_{d - c}}}} \right)^r F_{m - (b-c) k + (b-a)r} }\\
&\qquad = \left( {\frac{{F_{d - a} F_{e - b}  - F_{e - a} F_{d - b}}}{{F_{d - a} F_{e - c}  - F_{e - a} F_{d - c}}}} \right)^k F_m\,,
\end{split}
\end{equation}
\begin{equation}
\begin{split}
&\sum\limits_{r = 0}^k {\binom kr\left( {\frac{{F_{e - a} F_{d - b} - F_{d - a} F_{e - b}}}{{F_{d - a} F_{e - c}  - F_{e - a} F_{d - c}}}} \right)^r F_{m + (a-b)k + (b-c) r} }\\
&\qquad = \left( {\frac{{F_{d - c} F_{e - b}  - F_{e - c} F_{d - b}}}{{F_{e - a} F_{d - c} - F_{d - a} F_{e - c}}}} \right)^k F_m\,,
\end{split}
\end{equation}
\begin{equation}
\begin{split}
&\sum\limits_{r = 0}^k {\binom kr\left( {\frac{{F_{e - a} F_{d - b} - F_{d - a} F_{e - b}}}{{F_{d - c} F_{e - b}  - F_{e - c} F_{d - b}}}} \right)^r F_{m + (b-a)k + (a-c) r} }\\
&\qquad = \left( {\frac{{F_{d - a} F_{e - c}  - F_{e - a} F_{d - c}}}{{F_{e - c} F_{d - b } - F_{d - c} F_{e - b}}}} \right)^k F_m\,,
\end{split}
\end{equation}
\begin{equation}
\begin{split}
&\sum\limits_{r = 0}^k {\binom kr\left( {\frac{{L_{d - c} L_{e - b}  - L_{e - c} L_{d - b}}}{{L_{d - a} L_{e - c}  - L_{e - a} L_{d - c}}}} \right)^r L_{m - (b-c) k + (b-a)r} }\\
&\qquad = \left( {\frac{{L_{d - a} L_{e - b}  - L_{e - a} L_{d - b}}}{{L_{d - a} L_{e - c}  - L_{e - a} L_{d - c}}}} \right)^k L_m\,,
\end{split}
\end{equation}
\begin{equation}
\begin{split}
&\sum\limits_{r = 0}^k {\binom kr\left( {\frac{{L_{e - a} L_{d - b} - L_{d - a} L_{e - b}}}{{L_{d - a} L_{e - c}  - L_{e - a} L_{d - c}}}} \right)^r L_{m + (a-b)k + (b-c) r} }\\
&\qquad = \left( {\frac{{L_{d - c} L_{e - b}  - L_{e - c} L_{d - b}}}{{L_{e - a} L_{d - c} - L_{d - a} L_{e - c}}}} \right)^k L_m\,,
\end{split}
\end{equation}
\begin{equation}
\begin{split}
&\sum\limits_{r = 0}^k {\binom kr\left( {\frac{{L_{e - a} L_{d - b} - L_{d - a} L_{e - b}}}{{L_{d - c} L_{e - b}  - L_{e - c} L_{d - b}}}} \right)^r L_{m + (b-a)k + (a-c) r} }\\
&\qquad = \left( {\frac{{L_{d - a} L_{e - c}  - L_{e - a} L_{d - c}}}{{L_{e - c} L_{d - b } - L_{d - c} L_{e - b}}}} \right)^k L_m\,,
\end{split}
\end{equation}
\begin{equation}
\begin{split}
&\sum\limits_{r = 0}^k {\binom kr\left( {\frac{{J_{d - c} J_{e - b}  - J_{e - c} J_{d - b}}}{{J_{d - a} J_{e - c}  - J_{e - a} J_{d - c}}}} \right)^r J_{m - (b-c) k + (b-a)r} }\\
&\qquad = \left( {\frac{{J_{d - a} J_{e - b}  - J_{e - a} J_{d - b}}}{{J_{d - a} J_{e - c}  - J_{e - a} J_{d - c}}}} \right)^k J_m\,,
\end{split}
\end{equation}
\begin{equation}
\begin{split}
&\sum\limits_{r = 0}^k {\binom kr\left( {\frac{{J_{e - a} J_{d - b} - J_{d - a} J_{e - b}}}{{J_{d - a} J_{e - c}  - J_{e - a} J_{d - c}}}} \right)^r J_{m + (a-b)k + (b-c) r} }\\
&\qquad = \left( {\frac{{J_{d - c} J_{e - b}  - J_{e - c} J_{d - b}}}{{J_{e - a} J_{d - c} - J_{d - a} J_{e - c}}}} \right)^k J_m\,,
\end{split}
\end{equation}
\begin{equation}
\begin{split}
&\sum\limits_{r = 0}^k {\binom kr\left( {\frac{{J_{e - a} J_{d - b} - J_{d - a} J_{e - b}}}{{J_{d - c} J_{e - b}  - J_{e - c} J_{d - b}}}} \right)^r J_{m + (b-a)k + (a-c) r} }\\
&\qquad = \left( {\frac{{J_{d - a} J_{e - c}  - J_{e - a} J_{d - c}}}{{J_{e - c} J_{d - b } - J_{d - c} J_{e - b}}}} \right)^k J_m\,,
\end{split}
\end{equation}
\begin{equation}
\begin{split}
&\sum\limits_{r = 0}^k {\binom kr\left( {\frac{{j_{d - c} j_{e - b}  - j_{e - c} j_{d - b}}}{{j_{d - a} j_{e - c}  - j_{e - a} j_{d - c}}}} \right)^r j_{m - (b-c) k + (b-a)r} }\\
&\qquad = \left( {\frac{{j_{d - a} j_{e - b}  - j_{e - a} j_{d - b}}}{{j_{d - a} j_{e - c}  - j_{e - a} j_{d - c}}}} \right)^k j_m\,,
\end{split}
\end{equation}
\begin{equation}
\begin{split}
&\sum\limits_{r = 0}^k {\binom kr\left( {\frac{{j_{e - a} j_{d - b} - j_{d - a} j_{e - b}}}{{j_{d - a} j_{e - c}  - j_{e - a} j_{d - c}}}} \right)^r j_{m + (a-b)k + (b-c) r} }\\
&\qquad = \left( {\frac{{j_{d - c} j_{e - b}  - j_{e - c} j_{d - b}}}{{j_{e - a} j_{d - c} - j_{d - a} j_{e - c}}}} \right)^k j_m\,,
\end{split}
\end{equation}
\begin{equation}
\begin{split}
&\sum\limits_{r = 0}^k {\binom kr\left( {\frac{{j_{e - a} j_{d - b} - j_{d - a} j_{e - b}}}{{j_{d - c} j_{e - b}  - j_{e - c} j_{d - b}}}} \right)^r j_{m + (b-a)k + (a-c) r} }\\
&\qquad = \left( {\frac{{j_{d - a} j_{e - c}  - j_{e - a} j_{d - c}}}{{j_{e - c} j_{d - b } - j_{d - c} j_{e - b}}}} \right)^k j_m\,,
\end{split}
\end{equation}
\begin{equation}
\begin{split}
&\sum\limits_{r = 0}^k {\binom kr\left( {\frac{{P_{d - c} P_{e - b}  - P_{e - c} P_{d - b}}}{{P_{d - a} P_{e - c}  - P_{e - a} P_{d - c}}}} \right)^r P_{m - (b-c) k + (b-a)r} }\\
&\qquad = \left( {\frac{{P_{d - a} P_{e - b}  - P_{e - a} P_{d - b}}}{{P_{d - a} P_{e - c}  - P_{e - a} P_{d - c}}}} \right)^k P_m\,,
\end{split}
\end{equation}
\begin{equation}
\begin{split}
&\sum\limits_{r = 0}^k {\binom kr\left( {\frac{{P_{e - a} P_{d - b} - P_{d - a} P_{e - b}}}{{P_{d - a} P_{e - c}  - P_{e - a} P_{d - c}}}} \right)^r P_{m + (a-b)k + (b-c) r} }\\
&\qquad = \left( {\frac{{P_{d - c} P_{e - b}  - P_{e - c} P_{d - b}}}{{P_{e - a} P_{d - c} - P_{d - a} P_{e - c}}}} \right)^k P_m\,,
\end{split}
\end{equation}
\begin{equation}
\begin{split}
&\sum\limits_{r = 0}^k {\binom kr\left( {\frac{{P_{e - a} P_{d - b} - P_{d - a} P_{e - b}}}{{P_{d - c} P_{e - b}  - P_{e - c} P_{d - b}}}} \right)^r P_{m + (b-a)k + (a-c) r} }\\
&\qquad = \left( {\frac{{P_{d - a} P_{e - c}  - P_{e - a} P_{d - c}}}{{P_{e - c} P_{d - b } - P_{d - c} P_{e - b}}}} \right)^k P_m\,,
\end{split}
\end{equation}
\begin{equation}
\begin{split}
&\sum\limits_{r = 0}^k {\binom kr\left( {\frac{{Q_{d - c} Q_{e - b}  - Q_{e - c} Q_{d - b}}}{{Q_{d - a} Q_{e - c}  - Q_{e - a} Q_{d - c}}}} \right)^r Q_{m - (b-c) k + (b-a)r} }\\
&\qquad = \left( {\frac{{Q_{d - a} Q_{e - b}  - Q_{e - a} Q_{d - b}}}{{Q_{d - a} Q_{e - c}  - Q_{e - a} Q_{d - c}}}} \right)^k Q_m\,,
\end{split}
\end{equation}
\begin{equation}
\begin{split}
&\sum\limits_{r = 0}^k {\binom kr\left( {\frac{{Q_{e - a} Q_{d - b} - Q_{d - a} Q_{e - b}}}{{Q_{d - a} Q_{e - c}  - Q_{e - a} Q_{d - c}}}} \right)^r Q_{m + (a-b)k + (b-c) r} }\\
&\qquad = \left( {\frac{{Q_{d - c} Q_{e - b}  - Q_{e - c} Q_{d - b}}}{{Q_{e - a} Q_{d - c} - Q_{d - a} Q_{e - c}}}} \right)^k Q_m
\end{split}
\end{equation}
and
\begin{equation}
\begin{split}
&\sum\limits_{r = 0}^k {\binom kr\left( {\frac{{Q_{e - a} Q_{d - b} - Q_{d - a} Q_{e - b}}}{{Q_{d - c} Q_{e - b}  - Q_{e - c} Q_{d - b}}}} \right)^r Q_{m + (b-a)k + (a-c) r} }\\
&\qquad = \left( {\frac{{Q_{d - a} Q_{e - c}  - Q_{e - a} Q_{d - c}}}{{Q_{e - c} Q_{d - b } - Q_{d - c} Q_{e - b}}}} \right)^k Q_m\,.
\end{split}
\end{equation}

\end{thm}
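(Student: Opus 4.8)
The plan is to observe that Theorem~\ref{thm.d7ll8n8} is nothing more than six instances of Lemma~\ref{lem.fg6x899}, one for each sequence in the list $\{F,L,J,j,P,Q\}$, together with a trivial check of the boundary case $k=0$ that Lemma~\ref{lem.fg6x899} does not itself cover.

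First I would recall that each of the six sequences is, by its defining recurrence, a homogeneous second order recurrence sequence with constant coefficients, defined for all $n\in\Z$: $F_n$ and $L_n$ satisfy $X_n=X_{n-1}+X_{n-2}$; $J_n$ and $j_n$ satisfy $X_n=X_{n-1}+2X_{n-2}$; and $P_n$ and $Q_n$ satisfy $X_n=2X_{n-1}+X_{n-2}$. Thus each one qualifies as an admissible $\{X_m\}$ in the hypothesis of Lemma~\ref{lem.fg6x899}.

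Next, for each sequence in turn I would substitute it for $\{X_m\}$ in the three identities of Lemma~\ref{lem.fg6x899}. For positive integer $k$ this immediately produces the eighteen displayed identities verbatim, the quantities $\Delta_1=X_{d-c}X_{e-b}-X_{e-c}X_{d-b}$ and $\Delta_2=X_{d-a}X_{e-c}-X_{e-a}X_{d-c}$ (and their negatives) being exactly the denominators that occur in each formula; so the phrase ``for which the denominator does not vanish'' in the statement is precisely the package of conditions $\Delta_1\ne0$, $\Delta_2\ne0$ demanded by Lemma~\ref{lem.fg6x899}. It then remains only to dispose of $k=0$: in that case the sum on the left collapses to its single $r=0$ term $\binom{0}{0}(\,\cdot\,)^0 X_m=X_m$, while the right side is $(\,\cdot\,)^0 X_m=X_m$, so each identity holds trivially. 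Hence all eighteen identities hold for every nonnegative integer $k$.

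I do not anticipate any genuine obstacle. The proof needs no new idea beyond the substitution already signalled in the text; the only points requiring a word of care are the mechanical verification that the six named recurrences are indeed of the required homogeneous constant-coefficient form, the bookkeeping that matches each displayed denominator to $\Delta_1$ or $\Delta_2$, and the separate (but immediate) treatment of $k=0$, since Lemma~\ref{lem.fg6x899} is stated for positive $k$ only.
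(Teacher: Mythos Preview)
Your proposal is correct and follows exactly the paper's own approach: the paper simply states that the theorem is obtained by using $X=F$, $X=L$, $X=J$, $X=j$, $X=P$, $X=Q$, in turn, in Lemma~\ref{lem.fg6x899}. Your extra remark handling $k=0$ is a small refinement the paper omits, but otherwise the arguments are identical.
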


\end{document}